\title{\bf Qualche Idea}
\author{}
\newtheorem{theorem}{Theorem}[section]
\newtheorem{proposition}[theorem]{Proposition}
\newtheorem{lemma}[theorem]{Lemma}
\newtheorem{remark}[theorem]{Remark}
\newtheorem{example}[theorem]{Example}
\theoremstyle{definition}
\newtheorem*{definition*}{Definition}
\newtheorem*{proposition*}{Proposition}
\newtheorem*{corollary*}{Corollary}
\newtheorem*{lemma*}{Lemma}
\newtheorem*{remark*}{Remark}
\newcommand{\cX}{\mathcal X}
\newcommand{\cL}{\mathcal L}
\newcommand{\fq}{\mathbb {F}_q}
\newcommand{\N}{\mathbb {N}}
\newcommand{\Z}{\mathbb {Z}}
\newcommand{\bP}{\mathbb {P}}
\def\cD{\mathcal D}
\title{Algebraic Geometric Codes on Many Points from Kummer Extensions}
\author{D. Bartoli, L. Quoos, and G. Zini}
\date{}
\begin{document}

\maketitle 

\begin{abstract}
For Kummer
extensions defined by $y^m = f (x)$, where $f (x)$ is a separable polynomial over the finite field $\mathbb{F}_q$,
we compute the number of Weierstrass gaps at two totally ramified places. For many totally ramified places we give a criterion to find pure gaps at these points and present families of pure gaps. We then apply our
results to construct $n$ -points algebraic geometric codes with good parameters.
\end{abstract}

{\bf Keywords:} Weierstrass semigroups, algebraic geometric codes, codes on many points, Kummer extensions.

{\bf MSC:} 11G20, 14G50, 14H55.

\footnote{

Daniele Bartoli is with the Dipartimento di Matematica e Informatica, Universit\`a degli Studi di Perugia, Via Vanvitelli 1 - 06123 Perugia - Italy, {\em email:} {daniele.bartoli}.

Giovanni Zini is with the Dipartimento di Matematica e Informatica ``Ulisse Dini'', Universit\`a degli Studi di Firenze, Viale Morgagni 67/A - 50134 Firenze - Italy, {\em email: }{gzini@math.unifi.it}.

Luciane Quoos is with the Instituto de Matem\'atica, Universidade Federal do Rio de Janeiro, Rio de Janeiro 21941-909 - Brazil, {\em email: } {luciane@im.ufrj.br}.
}

\section{Introduction}
In the early eighties tools from algebraic geometry were applied by V. Goppa  to construct linear codes using algebraic curves over finite fields, see \cite{G1982}. Nowadays these codes are called algebraic-geometric codes, AG codes for short. The starting point in the construction of an AG code is a projective, absolutely irreducible, non singular algebraic curve $\cX$ of genus $g \geq 1$ defined over the finite field $\fq$ with cardinality $q$. Let $F=\fq(\cX)$ be its function field with $\fq$ being the field of constants. Consider $Q_1, \dots , Q_n$ pairwise distinct rational places on $F$. Let $D=Q_1+ \dots +Q_n$ and $G$ be divisors such that $Q_i$ is not in the support of $G$ for $i=1, \dots, n$. The linear code $C_{\Omega}(D,G)$ is defined by
$$
C_{\Omega}(D,G)=\{(\mbox{res}_{Q_1}(\eta),\ldots,\mbox{res}_{Q_n}(\eta))\mid \eta\in \Omega(G-D)\}\subseteq \fq^n,
$$
where $\Omega(G-D)$ is the space of $\fq$-rational differentials $\eta$ on $\mathcal{X}$ such that either $\eta=0$ or $\mbox{div}(\eta)\succeq G-D$ and $\mbox{res}_{Q_j}\eta$ is the residue of $\eta$ at $Q_j$.

The code $C_{\Omega}(D,G)$ has length $n$ and dimension $k = i(G-D) - i(G)$
where $i(G)$ denotes the speciality index of the divisor $G$.
We say that $C_\Omega(D,G)$ is an $[n, k, d]$-code where $d$ denotes the minimum distance of the code. One of the main features of this code is that its minimum distance $d$ satisfies the classical Goppa bound,
namely
$$d \geq \deg G - (2g-2).$$
The integer $d^*=\deg G - (2g-2)$ is usually called the \emph{designed minimum distance}.
One way to obtain codes with good parameters is to find codes that improve the designed minimum distance.

If $G=\alpha P$ for some rational place $P$ on $F$ and $D$ is the sum of other
rational places on $\cX $, then the code $C_{\Omega}(D,G)$ is called an {\it one-point
AG code}. Analogously, if $G= \alpha_1 P_1+ \cdots +\alpha_nP_n$ for $n$ distinct rational
places $P_1,\ldots,P_n$ on $\cX$, then  $C_{\Omega}(D,G)$ is called a {\it $n$-point AG code}. For a more detailed introduction to AG codes, see \cite{HLP1998,S2009}. 

For a one-point divisor $G=\alpha P$ on the function field $F$, Garcia, Kim, and Lax \cite{GL1992,GKL1993} improved the designed minimum distance using the arithmetical structure of the Weierstrass semigroup at the rational place $P$. For a two-point divisor $G = \alpha_1P_1 + \alpha_2P_2$, Homma and Kim \cite{HK2001} introduced the notion of pure gaps and obtained similar results. By choosing $\alpha_1$ and $\alpha_2$ satisfying certain arithmetical conditions depending on the structure of the Weierstrass semigroup at $P_1$ and $P_2$, they improved the designed minimum distance. Matthews \cite{G2001}  showed that for an arbitrary curve there exist two-point AG codes that have better parameters than any comparable one-point AG code constructed from the same curve.
Finally, for divisors $G= \alpha_1 P_1+ \cdots +\alpha_nP_n$ at $n$ distinct rational places on $\cX$, results from the theory of generalized Weierstrass semigroups and pure gaps were obtained by Carvalho and Torres \cite{CT2005}. They have been used to obtain AG codes whose minimum distance beats the classical Goppa bound on the minimum distance, see Theorem \ref{distmanypoints}.
 
Many applications and results on AG codes can be found for one- and two-point codes in \cite{HK2001, DK2011, ST2014, MQS2016}, and  for $n$-point codes in \cite{G2004, CT2005, CK2007}. 
The minimum distances of several AG codes have been studied in the case when $\cX$ is a Kummer curve. For instance, when $\cX$ is the Hermitian curve results can be found in \cite{HK2001, G2001, HK2006}, or a subcover of the Hermitian curve in \cite{M2004}, or a generalization of the Hermitian curve in \cite{ST2014}.

In this paper we analyze $n$-point codes when $F$ is a Kummer extension defined by $y^m= f(x),$ where $ f(x) \in \fq[x]$ is a separable polynomial of degree $r$ coprime to $m$. We extend results by Castellanos, Masuda, and Quoos \cite{MQS2016} on the Weierstrass semigroup at two rational places $P_1$ and $P_2$. In particular, for a class of Kummer curves we explicitly compute  the number of gaps at $P_1, P_2$, see Theorem \ref{twopoints}, generalizing a result by Matthews {\rm \cite[Theorem 3.6]{G2001}}.
For Kummer extensions, we also study the Weierstrass semigroup at many rational places under some hypothesis on the places. We give an arithmetic characterization of pure gaps (Propositions \ref{puregapsmanypoints} and \ref{puregapsinfty}) and apply it to a large family of Kummer extensions to provide families of pure gaps (Propositions \ref{puregapstwopoints}, \ref{manypoints1}, and \ref{manypoints2}).
We obtain codes such that the Singleton defect $\delta=n+1-k-d$ is improved, see Remarks \ref{applications1} and \ref{applications2}. We illustrate our results constructing AG codes on many points from the the Hermitian function field, and observe that the best improvements on the minimum distance with respect to the corresponding ones in the MinT's Tables~\cite{MinT} are obtained by two- or three-point codes.

The paper is organized as follows.
In Section \ref{Sec:Preliminary} we set the notations and present the preliminary results on the Weierstrass semigroup at two and many points. In Sections \ref{Sec:TwoPoints} and \ref{Sec:ManyPoints} we consider a large class of Kummer curves.
In particular, in Section \ref{Sec:TwoPoints} we study the Weierstrass semigroup at two totally ramified rational points and compute the number of gaps at them. In Section \ref{Sec:ManyPoints} we give an arithmetic characterization of pure gaps at many points which provides families of pure gaps. We apply them to the construction of AG codes improving the Singleton defect. We illustrate our results constructing AG codes on many points from the Hermitian curve, see Example \ref{ExHerm}.

\section{Preliminary results}\label{Sec:Preliminary}
Let $\cX$ be a projective, absolutely irreducible, nonsingular algebraic curve of genus $g$ defined over the finite field $\fq$. Let $F=\fq(\cX)$ be its function field with the field of constants $\fq$. For a function $z$ in $F$, $(z)$ and $(z)_\infty$ stand for its principal and polar divisor, respectively. We denote by $\bP(F)$ the set of places of $F$ and by $\cD_F$ the free abelian group generated by the places of $F$. The elements $D$ of $\cD_F$ are called {\it divisors} and can be written as
$$
D=\sum_{P\in \bP(F)}n_P\,P\quad \text{ with } n_P\in \Z,\;  n_P=0\text{ for almost all }P\in\bP(F).
$$
The degree of a divisor $D$ is $\deg(D)=\sum\limits_{P\in \bP(F)}n_P \cdot\deg P$, where $\deg P$ is the degree of the place $P$ over $\fq$.
Given a divisor $D\in\cD_F$, the Riemann-Roch vector space associated to $D$ is defined by
$
\cL(D):=\{z\in F\,|\, (z)\ge -D\}\cup \{0\}.
$
We denote by $\ell(D)$ the dimension of $\cL(D)$ as a vector space over the field of constants $\fq$. From the Riemann-Roch Theorem, it follows that, for divisors $D$ such that $2g-1 < \deg D$, we have $\ell(D) = \deg(D)+1-g$, see \cite[Th. 1.5.17]{S2009}. 

Let $\N$ be the set of non-negative integers.
For distinct rational places $P_1, \dots, P_s$ on $\bP(F)$, let
$$
H(P_1, \dots, P_s)= \{(n_1, \dots , n_s) \in \N^s \ | \ \exists \ z \in F \text{ with } (z)_\infty = n_1P_1 + \cdots + n_sP_s \}
$$
be the Weierstrass semigroup at $P_1, \dots , P_s$.  The complement $G(P_1, \dots, P_s)=\N^s \setminus H(P_1, \dots, P_s)$ is always a finite set and its elements are called {\it Weierstrass gaps} at $P_1, \dots, P_s$.
A gap can be characterized in terms of the dimension of certain Riemann-Roch spaces, more specifically, an $s$-tuple $(n_1, \dots , n_s) \in \N^s$ is a gap at $P_1, \dots, P_s$ if and only if $\ell\big(\sum_{i=1}^s n_iP_i\big)= \ell\big((\sum_{i=1}^s n_iP_i)- P_j\big)$ for some $j \in \{1, \dots, s\}$.

For $s=1$, the semigroup $H(P_1)$ is the well-known Weierstrass semigroup at one point on the curve and $G(P_1)$ has exactly $g$ gaps. For $s \geq 2$, the number of gaps may vary depending on the choice of the points.
When $s=2$, the size of $G(P_1,P_2)$ was given by M. Homma \cite{H1996} in terms of $G(P_1)$ and $G(P_2)$ as follows. Let $1=a_1<a_2<\cdots<a_g$ and $1=b_1<b_2<\cdots<b_g$ be the gap sequences at $P_1$ and $P_2$, respectively. For $i=1,\ldots, g$, let $\gamma(a_i) = \min \{ b \in G(P_2) \mid  (a_i, b) \in H(P_{1}, P_{2}) \}$. By \cite[Lemma 2.6]{K1994}, $\{ \gamma(a_i) \mid i=1, \dots, g \} = G(P_{2})$. Therefore, there exists a permutation $\sigma$ of the set $\{1, \ldots , g\}$ such that $\gamma(a_i)=b_{\sigma(i)}$, and $$\Gamma(P_{1}, P_{2})=\{ (a_{i} , b_{\sigma(i)}) \mid  i=1, \ldots ,g\}$$ is the graph of a bijective map $\gamma $ between $G(P_{1})$ and $G(P_{2})$.
Define $$r(P_1,P_2)=|\{(x,y)\in\Gamma(P_1,P_2)\mid x<y,\gamma(x)>\gamma(y)\}|$$
the number of inversions for $\gamma$.

\begin{theorem}[{\!\!\cite[Theorem 1]{H1996}}]\label{numberofgaps}
Under the above notation, the number of gaps at $P_1,P_2$ is
$$ |G(P_1,P_2)|=\sum_{i=1}^g a_i + \sum_{i=1}^g b_i - r(P_1,P_2). $$
\end{theorem}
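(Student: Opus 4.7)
The plan is to decompose $G(P_1,P_2)$ according to the permutation $\sigma$ and then count each piece combinatorially.

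The key preparatory step is a structural characterization of $H(P_1,P_2)$: if $z_1,z_2\in F$ witness $(m_1,n_1),(m_2,n_2)\in H(P_1,P_2)$, then for a generic $\mu\in\fq$ the function $z_1+\mu z_2$ has pole divisor exactly $\max(m_1,m_2)P_1+\max(n_1,n_2)P_2$, so $H(P_1,P_2)$ is closed under coordinate-wise maxima. Combined with the availability of functions with pole divisor exactly $kP_i$ for $k\in H(P_i)$ and with Kim's definition of $\gamma$, this yields: for $a_i\in G(P_1)$ and $n\geq 0$,
\[
(a_i,n)\in H(P_1,P_2)\iff \bigl(n\in H(P_2)\text{ and }n\geq b_{\sigma(i)}\bigr)\text{ or }\bigl(n=b_{\sigma(j)}\text{ for some }j\leq i\text{ with }\sigma(j)\geq\sigma(i)\bigr).
\]
The nontrivial direction excludes $(a_i,b_k)\in H(P_1,P_2)$ whenever $k=\sigma(j')$ for some $j'>i$: combining a hypothetical witness $z$ for $(a_i,b_k)$ with a witness $w$ for $(a_{j'},b_k)$ via $w-\mu z$ (for a suitable $\mu\in\fq$) strictly lowers the pole order at $P_2$ while preserving pole order $a_{j'}$ at $P_1$, producing, after possibly subtracting off additional elements of $\cL(n'P_2)$ with $n'\in H(P_2)$, a witness for some $(a_{j'},n')$ with $n'\in G(P_2)$ and $n'<b_k=\gamma(a_{j'})$, contradicting the minimality in Kim's definition of $\gamma$.

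From the characterization, the row $\{(a_i,n):n\geq 0\}$ contains exactly $b_{\sigma(i)}+N(i)$ gaps, where $N(i):=|\{j>i:\sigma(j)>\sigma(i)\}|$. I would then split $G(P_1,P_2)$ into four disjoint types: the $2g$ axial gaps $(a_i,0)$ and $(0,b_j)$; the T1 gaps in $G(P_1)\times G(P_2)$ with both coordinates positive; the T2 gaps $(a_i,n)$ with $n\in H(P_2)\setminus\{0\}$; and the T3 gaps $(m,b_j)$ with $m\in H(P_1)\setminus\{0\}$. Splitting each row count using the elementary identities $|H(P_2)\cap[1,b_k-1]|=b_k-k$, $\sum_i\sigma(i)=\binom{g+1}{2}$ and $\sum_iN(i)=\binom{g}{2}-r(P_1,P_2)$ produces $|T_1|=g(g-1)-r(P_1,P_2)$ and $|T_2|=\sum_jb_j-\binom{g+1}{2}$. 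An entirely symmetric column analysis (swapping $P_1\leftrightarrow P_2$ replaces $\sigma$ by $\sigma^{-1}$, which has the same number of inversions) gives $|T_3|=\sum_ia_i-\binom{g+1}{2}$. Adding $2g+|T_1|+|T_2|+|T_3|$ then collapses to $\sum_{i=1}^ga_i+\sum_{i=1}^gb_i-r(P_1,P_2)$, as required.

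The main obstacle is the ``only if'' direction of the structural characterization, which is where Kim's minimality property of $\gamma$ is really used; once it is in place the rest is essentially a bookkeeping exercise with the permutation $\sigma$ together with the standard identity relating the $k$-th gap $b_k$ of a numerical semigroup to the number of non-gaps below it.
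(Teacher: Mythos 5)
First, a point of orientation: the paper does not prove this statement at all --- it is imported verbatim from Homma \cite{H1996}, and the adjacent machinery (Lemma \ref{lemmagamma}, Equation \eqref{lub}) from Kim \cite{K1994} --- so there is no in-paper proof to compare your argument against. Judged on its own, your proof is essentially correct and amounts to a reconstruction of the Homma--Kim argument: your closure of $H(P_1,P_2)$ under coordinate-wise maxima is exactly Equation \eqref{lub} restricted to two witnesses; your row characterization of $\{n\in\N:(a_i,n)\in H(P_1,P_2)\}$ does follow from it together with the minimality in the definition of $\gamma$; and the bookkeeping checks out: each row contributes $b_{\sigma(i)}+N(i)$ gaps, the identities $|H(P_2)\cap[1,b_k-1]|=b_k-k$ and $\sum_i N(i)=\binom{g}{2}-r(P_1,P_2)$ are applied correctly, and $2g+\bigl(g(g-1)-r(P_1,P_2)\bigr)+\bigl(\sum_j b_j-\binom{g+1}{2}\bigr)+\bigl(\sum_i a_i-\binom{g+1}{2}\bigr)$ does collapse to $\sum_i a_i+\sum_i b_i-r(P_1,P_2)$.

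Two places need tightening. (1) Your ``only if'' direction treats only the case $n=b_k\in G(P_2)$ with $k=\sigma(j')$, $j'>i$; but the characterization also asserts $(a_i,n)\notin H(P_1,P_2)$ for $n\in H(P_2)$ with $0<n<b_{\sigma(i)}$, and this requires its own (identical) descent: subtract elements of $\cL(n'P_2)$ to push the $P_2$-pole order down into $G(P_2)\cup\{0\}$ while freezing the $P_1$-pole order at $a_i$, then invoke the minimality of $\gamma(a_i)$, respectively the fact that $a_i\in G(P_1)$. Since the row count $b_{\sigma(i)}+N(i)$, and hence $|T_2|$, depends on this exclusion, it cannot be left implicit. (2) ``Generic $\mu\in\fq$'' is not a safe justification of lub-closure over a small finite field: when $m_1=m_2$ there may be no $\mu$ avoiding cancellation at $P_1$. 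The closure is nevertheless true unconditionally, because in every case where a nontrivial combination is actually needed one has $m_1\neq m_2$ and $n_1\neq n_2$, so $z_1+z_2$ itself works; alternatively one can simply quote Equation \eqref{lub}. Neither issue affects the final count, but both should be repaired in a complete write-up.
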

A characterization of $\Gamma(P_1,P_2)$ is the following.

\begin{lemma}[{\!\!\cite[Lemma 2]{H1996}}] \label{lemmagamma}
Let $\Gamma '$ be a subset of $(G(P_{1}) \times G(P_{2})) \cap H(P_{1},P_{2})$. If there exists a permutation $\tau$ of $\{ 1, \ldots , g\}$ such that $\Gamma ' = \{ (a_{i} , b_{\tau(i)}) \mid  i=1, \ldots ,g \}$, then $\Gamma ' = \Gamma(P_{1}, P_{2})$.
\end{lemma}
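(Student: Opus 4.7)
The plan is to show that the permutation $\tau$ appearing in the hypothesis must coincide with the permutation $\sigma$ that defines $\Gamma(P_1,P_2)$. Once we have $\tau=\sigma$, the two sets agree element by element.

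First, I would fix an index $i\in\{1,\ldots,g\}$ and use the defining property of $\gamma$. Since $(a_i,b_{\tau(i)})\in\Gamma'\subseteq H(P_1,P_2)$ and $b_{\tau(i)}\in G(P_2)$, by the very definition $\gamma(a_i)=\min\{b\in G(P_2)\mid (a_i,b)\in H(P_1,P_2)\}$ we obtain $\gamma(a_i)\le b_{\tau(i)}$. Recalling that $\gamma(a_i)=b_{\sigma(i)}$ and that $b_1<b_2<\cdots<b_g$ is an increasing enumeration of $G(P_2)$, this inequality translates into $\sigma(i)\le\tau(i)$ for every $i\in\{1,\ldots,g\}$.

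Next I would exploit the fact that $\sigma$ and $\tau$ are both permutations of $\{1,\ldots,g\}$. Summing over $i$ gives
\[
\sum_{i=1}^{g}\sigma(i)=\sum_{i=1}^{g}\tau(i)=\frac{g(g+1)}{2}.
\]
A pointwise inequality between two sequences with equal total sum forces pointwise equality: $\sigma(i)=\tau(i)$ for all $i$. Consequently $b_{\sigma(i)}=b_{\tau(i)}$ for every $i$, and hence $\Gamma'=\{(a_i,b_{\tau(i)})\}=\{(a_i,b_{\sigma(i)})\}=\Gamma(P_1,P_2)$.

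There is no real obstacle here; the argument is essentially combinatorial once one unpacks the minimality characterization of $\gamma$. The only subtle point to keep in mind is that the minimum in the definition of $\gamma(a_i)$ is taken over gaps of $P_2$, so the inequality $\gamma(a_i)\le b_{\tau(i)}$ is a legitimate comparison between two elements of $G(P_2)$, which justifies passing to indices under the strict monotonicity of $b_1<\cdots<b_g$.
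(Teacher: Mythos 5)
Your argument is correct. Note first that the paper itself gives no proof of this lemma: it is quoted verbatim from Homma \cite{H1996}, so there is nothing in the source to compare against line by line. Your two steps are both sound: since $\Gamma'\subseteq (G(P_1)\times G(P_2))\cap H(P_1,P_2)$, the pair $(a_i,b_{\tau(i)})$ witnesses that $b_{\tau(i)}$ lies in the set over which the minimum defining $\gamma(a_i)$ is taken, so $b_{\sigma(i)}=\gamma(a_i)\le b_{\tau(i)}$ and hence $\sigma(i)\le\tau(i)$ by strict monotonicity of the enumeration of $G(P_2)$; and the observation that two permutations of $\{1,\ldots,g\}$ with $\sigma(i)\le\tau(i)$ pointwise and equal total sum $g(g+1)/2$ must coincide is a clean way to finish. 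The only implicit ingredient you rely on is that $\gamma(a_i)$ is well defined (i.e.\ the minimum is over a nonempty set) and that $\gamma(a_i)=b_{\sigma(i)}$ for a permutation $\sigma$, which the paper establishes just before the lemma via \cite[Lemma 2.6]{K1994}; as long as you take that setup as given, your proof is complete and is arguably more transparent than chasing the original reference.
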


The Weierstrass semigroup $H(P_{1}, P_{2})$ can be recovered from $\Gamma (P_{1}, P_{2})$ as follows. For $\mathbf{x} = (a_1, b_1),\mathbf{y} = (a_2, b_2) \in \mathbb{N}^2$, define the \textit{least upper bound} of $\mathbf{x}$ and $\mathbf{y}$ as $\mathrm{lub}(\mathbf{x},\mathbf{y})= \left(\max\{a_1, a_2\}, \max\{b_1, b_2\}\right)$.
Then, by \cite[Lemma 2.2]{K1994},
\begin{equation}\label{lub}
H(P_{1},P_{2}) = \{ \mathrm{lub} (\mathbf{x},\mathbf{y}) \mid \mathbf{x},\mathbf{y} \in \Gamma(P_{1}, P_{2}) \cup (H(P_{1}) \times \{0\}) \cup (\{0\} \times H(P_{2})) \}.
\end{equation}

We now introduce the important concept of pure gaps that will be used in the construction of AG codes. An $s-$tuple $(n_1, \dots, n_s)\in\N^s$ is a {\it pure gap} at $P_1, \dots , P_s$ if 
$$\ell\Big(\sum_{i=1}^s n_iP_i\Big)= \ell\Big(\big(\sum_{i=1}^s n_iP_i\big)- P_j\Big)  \text{ for all } j=1, \dots, s.$$ 
The set of pure gaps at $P_1,\ldots,P_s$ is denoted by $G_0(P_1,\ldots,P_s)$. Clearly, a pure gap is always a gap.

\begin{lemma}[{\!\!\cite[Lemma 2.5]{CT2005}}]\label{purecondition}
An $s$-tuple $(n_1, \dots, n_s)$ is a pure gap at $P_1, \dots ,P_s$ if and only if $\ell\big(\sum_{i=1}^s n_iP_i\big)= \ell\big(\sum_{i=1}^s (n_i-1)P_i\big)$.
\end{lemma}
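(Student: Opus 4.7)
The plan is to exploit the elementary fact that for any effective divisor $E'$ and any place $P$, we have $\cL(E'-P)\subseteq\cL(E')$ with codimension at most $1$, so equality of the two spaces is equivalent to equality of their dimensions. Set $D:=\sum_{i=1}^s n_iP_i$ and $E:=P_1+\cdots+P_s$, so that the statement asks for the equivalence of
\[
\ell(D)=\ell(D-P_j)\quad\text{for every }j\in\{1,\ldots,s\}
\]
and $\ell(D)=\ell(D-E)$.

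For the direction $(\Leftarrow)$ I would argue as follows. Since $\cL(D-E)\subseteq\cL(D-P_j)\subseteq\cL(D)$ for every $j$, the assumption $\ell(D)=\ell(D-E)$ together with the inclusions forces $\cL(D-P_j)=\cL(D)$ for each $j$, hence $\ell(D-P_j)=\ell(D)$ for all $j$, which is precisely the pure-gap condition.

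For the direction $(\Rightarrow)$ I would telescope along the chain
\[
D\ \ge\ D-P_1\ \ge\ D-P_1-P_2\ \ge\ \cdots\ \ge\ D-E.
\]
The pure-gap hypothesis at $P_1$ gives $\cL(D)=\cL(D-P_1)$. For the inductive step, assume $\cL(D)=\cL(D-P_1-\cdots-P_{k-1})$; then any function in $\cL(D-P_1-\cdots-P_{k-1})$ lies in $\cL(D)$, and the pure-gap hypothesis at $P_k$ (which says no element of $\cL(D)$ has pole order exactly $n_k$ at $P_k$) implies that such a function also lies in $\cL(D-P_k)$, hence in $\cL(D-P_1-\cdots-P_k)$. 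This yields equality at every step and concludes $\ell(D)=\ell(D-E)$.

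The argument is essentially bookkeeping, and there is no real obstacle; the only subtlety worth highlighting is that the pure-gap condition is stated one place at a time while the desired identity involves subtracting all $s$ places simultaneously, and the chain-of-inclusions viewpoint bridges the two formulations cleanly.
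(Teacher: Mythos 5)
Your proof is correct. Note that the paper itself gives no argument for this statement: it is quoted directly from Carvalho--Torres \cite[Lemma 2.5]{CT2005}, so there is no in-paper proof to compare against. Your two directions are both sound: the backward direction is the standard squeeze $\cL(D-E)\subseteq\cL(D-P_j)\subseteq\cL(D)$, and the forward direction works because membership in a Riemann--Roch space is a place-by-place condition, so your telescoping induction goes through; indeed it can be compressed to the single observation that $\cL\big(D-\sum_{j=1}^s P_j\big)=\bigcap_{j=1}^s\cL(D-P_j)$, which under the pure-gap hypothesis equals $\cL(D)$. The one point worth making explicit in the inductive step is exactly the one you flag: $\ell(D)=\ell(D-P_k)$ is equivalent to the set-theoretic equality $\cL(D)=\cL(D-P_k)$ (because the inclusion has codimension at most one), and it is the set-theoretic statement, i.e.\ that every $f\in\cL(D)$ satisfies $v_{P_k}(f)\ge -(n_k-1)$, that you actually use. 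With that spelled out, the argument is complete.
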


Pure gaps can be used to improve the designed minimum distance of AG codes.

\begin{theorem}[{\!\!\cite[Theorem 3.4]{CT2005}}] \label{distmanypoints}
Let $P_1, \dots , P_s,Q_1, \dots, Q_n$ be pairwise distinct $\fq$-rational points on $\cX$ and $(a_1, \dots , a_s),(b_1, \dots , b_s)\in\N^s$ be two pure gaps at $P_1, \dots, P_s$. Consider the divisors $D= Q_1+ \cdots + Q_n$ and $G= \sum_{i=1}^s (a_i+b_i-1)P_i$.
Suppose that $a_i \leq b_i$ for all $i=1, \dots , s$, and that each $s$-tuple $(c_1, \dots , c_s)\in\N^s$ with $a_i \leq c_i \leq b_i$ for $i=1,\ldots,s$, is also a pure gap at $P_1, \dots , P_s$. The the minimum distance $d$ of $C_{\Omega}(D,G)$ satisfies 
$$ d \geq \deg(G) -(2g-2)+s+\sum_{i=1}^s (b_i-a_i).$$
\end{theorem}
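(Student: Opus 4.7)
The plan is to sharpen the classical residue/Goppa argument by showing that every differential producing a low-weight codeword is forced, by the pure gap hypothesis on the entire box, to vanish to a higher order at the $P_i$'s than $G-D$ alone requires. Concretely, I would start with a nonzero codeword $c \in C_\Omega(D, G)$ of weight $w$; the definition of the code yields a nonzero $\fq$-rational differential $\eta \in \Omega(G - D)$ with $\mathrm{res}_{Q_j}(\eta) = 0$ at exactly $n - w$ of the places $Q_j$. Collecting those places into an effective divisor $T$ of degree $n - w$ supported in $\{Q_1, \dots, Q_n\}$, we get $\eta \in \Omega(G - D + T)$, and since $\mathrm{supp}(D + T) \cap \{P_1, \dots, P_s\} = \emptyset$ the inclusion $(\eta) \succeq G - D + T$ only gives $v_{P_i}(\eta) \ge a_i + b_i - 1$ at each $P_i$.

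Set $E = \sum_{i=1}^s (b_i - a_i + 1) P_i$, so that $\deg E = s + \sum_{i=1}^s (b_i - a_i)$ matches the improvement we want to prove. The heart of the argument is the identity
$$\Omega(G - D + T) \;=\; \Omega(G + E - D + T),$$
equivalently, via the duality $\dim \Omega(M) = \ell(K - M)$ with $K$ a canonical divisor, the Riemann-Roch equality $\ell(K - G + D - T) = \ell(K - G - E + D - T)$. I would prove this by interpolating along a chain
$$K - G + D - T \;=\; N_0 \;\succ\; N_1 \;\succ\; \cdots \;\succ\; N_m \;=\; K - G - E + D - T, \qquad m = \deg E,$$
where each $N_{j+1} = N_j - P_i$ removes a single $P_i$, the order of the subtractions being chosen so that the $P$-coordinates $(c_1, \dots, c_s)$ of the accumulated part traverse the lattice points of the box $[a_1, b_1] \times \cdots \times [a_s, b_s]$ in a monotone fashion. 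At each step the pure gap condition for the current lattice point, combined with Lemma \ref{purecondition} and Riemann-Roch, should yield the local equality $\ell(N_j) = \ell(N_{j+1})$; telescoping then gives the claim.

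Granting the identity, $\eta \in \Omega(G + E - D + T)$ implies $(\eta) \succeq G + E - D + T$, so $2g - 2 = \deg((\eta)) \ge \deg G + \deg E - w$ and hence
$$w \;\ge\; \deg G - (2g - 2) + \deg E \;=\; \deg G - (2g - 2) + s + \sum_{i=1}^s (b_i - a_i).$$
Since $w$ was the weight of an arbitrary nonzero codeword, this is the desired lower bound on $d$. The main obstacle is the middle paragraph: Lemma \ref{purecondition} addresses only Riemann-Roch spaces attached to the pure divisors $\sum c_i P_i$, whereas the chain divisors $N_j$ carry the additional contribution $K + D - T$ of places disjoint from $\{P_1, \dots, P_s\}$; combining them correctly requires careful bookkeeping, and both the inequality $a_i \le b_i$ and the box-wise pure gap hypothesis are precisely what allow each one-step dimensional comparison to go through.
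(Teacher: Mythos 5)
First, a point of reference: the paper does not prove this statement. It is quoted as \cite[Theorem 3.4]{CT2005} and used as a black box, so your attempt can only be measured against the Carvalho--Torres argument itself. Your outer frame is the standard one and is fine: a nonzero codeword of weight $w$ yields $0\neq\eta\in\Omega(G-D+T)$ with $\deg T=n-w$, and once one knows $\eta\in\Omega(G+E-D+T)$ for $E=\sum_{i=1}^s(b_i-a_i+1)P_i$, the degree count $2g-2\geq\deg G+\deg E-w$ gives the bound.

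The gap is the middle step, and it is genuine rather than a matter of bookkeeping. To run your chain $N_{j+1}=N_j-P_i$ and conclude that the corresponding $\Omega$-spaces coincide, you need at each step $\ell(N_j)=\ell(N_j-P_i)$, i.e.\ that $P_i$ is a base point of $|N_j|$ where $N_j=K-G+D-T-(\text{partial }E)$. Lemma \ref{purecondition} and the box hypothesis only give equalities of the form $\ell\bigl(\sum_i c_iP_i\bigr)=\ell\bigl(\sum_i c_iP_i-P_j\bigr)$ for divisors supported entirely at $P_1,\dots,P_s$. Such a base-point property is preserved when one \emph{subtracts} an effective divisor (since $\mathcal L(A-B)\setminus\mathcal L(A-B-P)\subseteq\mathcal L(A)\setminus\mathcal L(A-P)$ for $B\succeq 0$), but it is not preserved when one \emph{adds} one, and your $N_j$ differ from $\sum_i c_iP_i$ by a divisor containing $K$ and $D-T$ with positive sign (indeed, choosing $K=(\eta)$ makes $N_0=K-G+D-T$ effective). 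Adding effective divisors can create new functions with an exact pole at $P_i$ and destroy the equality, so the pure-gap hypothesis never reaches the divisors $N_j$, and the identity $\Omega(G-D+T)=\Omega(G+E-D+T)$ is left unproved. A second symptom that the route is off: you never use that the residues of $\eta$ are nonzero at the $w$ places of $D-T$, i.e.\ that $v_Q(\eta)=-1$ exactly there. In the Carvalho--Torres proof (as in Homma--Kim's two-point version and Garcia--Kim--Lax's one-point version) this is essential: one writes $(\eta)=G-(D-T)+F$ with $F$ effective of degree $2g-2-\deg G+w$; if $w$ were smaller than the claimed bound, the $P_i$-coordinates $\epsilon_i$ of $F$ would satisfy $\sum_i\epsilon_i<\sum_i(b_i-a_i+1)$, so $\epsilon_j\leq b_j-a_j$ for some $j$, and the tuple $c_i=\min\{a_i+\epsilon_i,b_i\}$ is then a pure gap inside the box whose pure-gap property, confronted with the prescribed valuations $v_{P_i}(\eta)=a_i+b_i-1+\epsilon_i$, produces the contradiction. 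That argument is pointwise in $\eta$, via its valuations, not an equality of whole spaces of differentials; to repair your write-up you would need to replace the chain of $N_j$'s by this localisation of a pure gap determined by $(\eta)$ itself.
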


Hereafter we work on a Kummer extension $F=\fq(x,y)/\fq(x)$ defined by 
$y^m=f(x), \, m\geq2, \, p\nmid m, f(x)$ a separable polynomial of degree $r$ in  $\mathbb{F}_q[x]$
and $\gcd(m,r)=1$. We denote by
$P_1,\ldots ,P_s, P_\infty (s \leq r),$ the rational places of $F$ which are totally ramified in the extension $F/ \fq(x)$, and $P_\infty$ is the pole of $x$. The genus $g$ of $F$ is $(m-1)(r-1)/2$.

We use a result by Maharaj \cite{M2004} to build up an arithmetic characterization of pure gaps at many points in a \emph{Kummer extension}. Firstly we need the definition of the restriction of a divisor in a function field extension $F/K$. For any divisor $D$ of $F$ and any intermediate field $K \subseteq E\subseteq F$, write
$D = \sum_{R\in \bP(E)}\;\sum_{Q\in \bP(F),\,Q|R}\, n_Q\, Q$.
We define the restriction of $D$ to $E$ as
$$
D\Big|_{E}= \sum\limits_{R\in \bP(E)} \min\,\left\{\left\lfloor\frac{n_Q}{e(Q|R)}\right\rfloor\colon
{Q|R}\right\}\,R,
$$
where $e(Q|R)$ is the ramification index of $Q$ over $R$.

\begin{theorem}[{\!\!\cite[Theorem 2.2]{M2004}}] \label{ThMaharaj}
Let $F/\fq(x)$ be a Kummer extension of degree $m$ defined by $y^m=f(x)$. Then, for any divisor $D$ of $F$ that is invariant under the action of $Gal(F/\fq(x))$, we have that
$$ \mathcal{L}(D)= \bigoplus\limits_{t=0}^{m-1} \mathcal L\left(\left[D+(y^t)\right]\Big|_{\fq(x)}\right)\,y^t,$$
where $\left[D+(y^t)\right]\Big|_{\fq(x)}$ denotes the restriction of the divisor $D+(y^t)$ to $\fq(x)$.
\end{theorem}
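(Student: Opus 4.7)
The plan is to exploit the decomposition $F=\bigoplus_{t=0}^{m-1}\fq(x)\,y^t$ of $F$ as a free $\fq(x)$-module with basis $\{1,y,\dots,y^{m-1}\}$. Every $z\in F$ admits a unique expression $z=\sum_{t=0}^{m-1} f_t\,y^t$ with $f_t\in\fq(x)$, and the goal is to show that $z\in\mathcal{L}(D)$ if and only if $f_t\in\mathcal{L}\bigl([D+(y^t)]|_{\fq(x)}\bigr)$ for every $t$. I would split this into two independent steps: first, that $\mathcal{L}(D)$ respects the above direct sum; second, that for a Galois-invariant divisor $E$ of $F$ and $f\in\fq(x)$, the condition $f\in\mathcal{L}(E)$ viewed inside $F$ coincides with $f\in\mathcal{L}(E|_{\fq(x)})$ viewed inside $\fq(x)$.

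For the first step I would invoke the Galois action. Since $p\nmid m$ and the Kummer setting provides a primitive $m$-th root of unity $\zeta$, the group $G=\mathrm{Gal}(F/\fq(x))$ is cyclic, generated by $\sigma\colon y\mapsto \zeta y$. Because $D$ is $G$-invariant, $\sigma$ preserves $\mathcal{L}(D)$. Evaluating $\sigma^j(z)=\sum_t \zeta^{jt}f_t\,y^t$ for $j=0,\dots,m-1$ produces a Vandermonde system in the unknowns $f_t y^t$, so each component $f_t y^t$ is an $\fq$-linear combination of the $\sigma^j(z)\in\mathcal{L}(D)$ and therefore belongs to $\mathcal{L}(D)$ itself. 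Conversely, sums of elements of $\mathcal{L}(D)$ lie in $\mathcal{L}(D)$. Hence $\mathcal{L}(D)=\bigoplus_{t}\bigl(\mathcal{L}(D)\cap\fq(x)y^t\bigr)$, and the condition $fy^t\in\mathcal{L}(D)$ is equivalent to $(f)+E\ge 0$, where $E:=D+(y^t)$ is again $G$-invariant because $\sigma(y^t)/y^t=\zeta^t\in\fq^\ast$ does not affect the divisor.

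For the second step, fix a place $R$ of $\fq(x)$ with the places $Q_1,\dots,Q_s$ of $F$ lying above it, and set $e_i=e(Q_i|R)$ and $n_i$ for the coefficient of $Q_i$ in $E$. For $f\in\fq(x)$ the identity $v_{Q_i}(f)=e_i\,v_R(f)$ turns $v_{Q_i}(f)\ge -n_i$ into $v_R(f)\ge \lceil -n_i/e_i\rceil=-\lfloor n_i/e_i\rfloor$. Imposing this simultaneously for all $i$ dividing $R$ is equivalent to $v_R(f)\ge -\min_i\lfloor n_i/e_i\rfloor$, which is exactly the coefficient of $R$ in $E|_{\fq(x)}$ as defined in the statement. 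Combining this equivalence with the splitting of the first step gives the claimed direct-sum decomposition of $\mathcal{L}(D)$.

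The main obstacle is not conceptual but lies in the careful local bookkeeping of the second step: one has to verify that the restriction operator, built from a minimum of floor-divided coefficients over the places upstairs, precisely encodes the local Riemann--Roch conditions once the function is constrained to lie in $\fq(x)$, with the right translation between $\lceil -n_i/e_i\rceil$ and $-\lfloor n_i/e_i\rfloor$. Once the Galois-equivariant splitting is in place, the remainder of the argument is essentially formal.
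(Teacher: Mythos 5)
The paper does not prove this statement: it is imported verbatim as \cite[Theorem 2.2]{M2004}, so there is no internal proof to compare against. Your argument is correct and is essentially Maharaj's original one — the Galois/Vandermonde averaging (valid here because $p\nmid m$ and $\fq$ contains a primitive $m$-th root of unity, which is implicit in the Kummer hypothesis and in writing $Gal(F/\fq(x))$) gives the splitting $\mathcal{L}(D)=\bigoplus_t\bigl(\mathcal{L}(D)\cap\fq(x)y^t\bigr)$, and the local computation $v_{Q_i}(f)=e_i v_R(f)$ together with $\lceil -n_i/e_i\rceil=-\lfloor n_i/e_i\rfloor$ correctly identifies $\mathcal{L}(D)\cap\fq(x)y^t$ with $\mathcal{L}\bigl([D+(y^t)]|_{\fq(x)}\bigr)y^t$, so the proof is complete.
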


\section{The Weierstrass semigroup at two points}\label{Sec:TwoPoints}
Let $F/\fq(x)$ be a Kummer extension defined by $y^m=f(x)$, where $f(x)\in\fq[x]$ is separable of degree $r$ coprime with $m$, and 
consider the Weierstrass semigroup $H(P,Q)$ at two rational places of $F$ which are totally ramified in $F/\fq(x)$.
As pointed out in Equation \eqref{lub}, the semigroup $H(P,Q)$ is related to the set $\Gamma(P,Q)$, and \cite[Theorem 4.3]{MQS2016} yields 
\begin{equation*}\label{gammainfty}
\Gamma(P_\infty, P_1)= \left\{ \left(mr-mj-ri, i+m(j-1)\right) \mid  1 \leq i \leq m-1-\left\lfloor \frac{m}{r}\right\rfloor, 1\leq j \leq r-1- \left\lfloor \frac{ri}{m}\right\rfloor \right\},
\end{equation*}
where $P_\infty$ is the unique pole of $x$ and $P_1$ is another totally ramified place.
We now compute $\Gamma(P_1,P_2)$, where $P_1$ and $P_2$ are two distinct rational places of $F$ different from $P_\infty$ and totally ramified in the extension $F/\fq(x)$.
\begin{proposition}\label{gammaP1P2}
Let $F/\fq(x)$ be a Kummer extension defined by $y^m=f(x)$, where $f(x)\in\fq[x]$ is separable of degree $r$ and $\gcd(r,m)=1$. If $P_1$ and $P_2$ are two distinct totally ramified places of $F$ different from  $P_\infty$, then
$$
\Gamma(P_1,P_2)=\left\{\left(mi-j, m\left(\left\lceil\frac{rj}{m}\right\rceil-i\right)-j\right) \mid  1+\left\lfloor\frac{m}{r}\right\rfloor \leq j \leq m-1,  1\leq i \leq  \left\lceil \frac{rj}{m}\right\rceil -1\right\}.
$$
\end{proposition}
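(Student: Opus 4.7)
The plan is to apply Lemma \ref{lemmagamma}, exhibiting a subset $\Gamma'\subseteq (G(P_1)\times G(P_2))\cap H(P_1,P_2)$ of size $g=(m-1)(r-1)/2$ whose first (respectively second) coordinates enumerate $G(P_1)$ (respectively $G(P_2)$). The core of the argument is producing an explicit function witnessing each candidate pair and pinning down the two one-point gap sets.

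First I would describe $G(P_1)$ and $G(P_2)$ explicitly. From $(y)=P_1+\cdots+P_r-rP_\infty$ and $(x-\alpha_l)=mP_l-mP_\infty$, any $z\in F$ decomposes uniquely as $z=\sum_{s=0}^{m-1}h_s(x)y^s$ with $h_s\in\fq(x)$, and the summands $h_sy^s$ have pairwise distinct valuations modulo $m$ at $P_1$ (and also at $P_\infty$, since $\gcd(m,r)=1$). Consequently $z$ has pole only at $P_1$ iff each nonzero $h_sy^s$ does; this forces $h_s(x)=p_s(x)/(x-\alpha_1)^b$ with $\deg p_s+\lceil rs/m\rceil\le b$. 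The smallest non-gap at $P_1$ in the residue class $-s\pmod m$ is therefore $m\lceil rs/m\rceil-s$, giving
$$
G(P_1)=\{mi-j:\ 1\le j\le m-1,\ 1\le i\le \lceil rj/m\rceil-1\},
$$
which is nonempty precisely for $j\ge 1+\lfloor m/r\rfloor$. The analogous description holds for $G(P_2)$ by symmetry.

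For each admissible pair $(i,j)$, set $k:=\lceil rj/m\rceil-i\ge 1$ and consider
$$
z=\frac{y^j}{(x-\alpha_1)^i\,(x-\alpha_2)^k}.
$$
A direct computation from the divisors above yields
$$
(z)=(j-mi)P_1+(j-mk)P_2+j\sum_{l=3}^{r}P_l+\bigl(m\lceil rj/m\rceil-rj\bigr)P_\infty,
$$
with nonnegative coefficients outside $\{P_1,P_2\}$ and strictly negative coefficients at $P_1$ and $P_2$. Hence the polar divisor of $z$ is exactly $(mi-j)P_1+(mk-j)P_2$, so $(mi-j,\,mk-j)\in H(P_1,P_2)$.

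Finally, I define $\Gamma'$ to be the collection of pairs $(mi-j,\,m(\lceil rj/m\rceil-i)-j)$ as $(i,j)$ ranges over the admissible set. By the two previous steps, $\Gamma'\subseteq (G(P_1)\times G(P_2))\cap H(P_1,P_2)$. The first coordinates are pairwise distinct, since $mi-j\pmod m$ recovers $j$ and then $i$, and they exhaust $G(P_1)$; the change of variable $i\leftrightarrow k$ shows the same for the second coordinates and $G(P_2)$. In particular $|\Gamma'|=g$ and there is a permutation $\tau$ of $\{1,\dots,g\}$ with $\Gamma'=\{(a_i,b_{\tau(i)})\}$, so Lemma \ref{lemmagamma} yields $\Gamma'=\Gamma(P_1,P_2)$. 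The main obstacle is the first step: producing the clean description of $G(P_1)$ via the decomposition $z=\sum h_s y^s$; once it is in place, the explicit function of the second step and the bookkeeping of the third finish the argument.
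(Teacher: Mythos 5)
Your proof is correct and follows essentially the same route as the paper: the same test functions $y^j/\big((x-\alpha_1)^i(x-\alpha_2)^k\big)$, the same candidate set $\Gamma'$, and the same appeal to Lemma \ref{lemmagamma}. The only differences are that you re-derive the two ingredients the paper simply cites (the pole divisor computation and the one-point gap description, i.e.\ {\rm \cite[Prop. 3.1]{MQS2016}} and {\rm \cite[Th. 3.2]{MQS2016}}) and you obtain $|\Gamma'|=g$ by noting that the first coordinates biject with $G(P_1)$ rather than by the explicit floor-sum evaluation; the one point to tighten is that your ``distinct valuations mod $m$'' argument for the gap description only controls the totally ramified places, so ruling out poles of the summands $h_sy^s$ at places unramified over $\fq(x)$ needs Theorem \ref{ThMaharaj} or a Galois-averaging argument.
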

\begin{proof}
For $\iota\in\{1,2\}$ let $\alpha_\iota\in\fq$ be such that $P_\iota$ is the unique zero of $x-\alpha_\iota$ in $F$. Let $i,j$ be positive integers and $k =\left\lceil\frac{jr}{m}\right\rceil-i$, so that $(i+k)m\geq jr$. By \cite[Prop. 3.1]{MQS2016}, the pole divisor of $\frac{y^{j}}{(x-\alpha_1)^i(x-\alpha_2)^k}$ is $(mi-j)P_1+(mk-j)P_2$.
Also, for $j\in\left\{ 1+\left\lfloor\frac{m}{r}\right\rfloor,\ldots, m-1\right\}$ and $h\in\left\{ 1,\ldots, \left\lceil \frac{rk}{m}\right\rceil -1\right\}$, we have that $(mh-j)\in G(P_1)\cap G(P_2)$ by \cite[Th. 3.2]{MQS2016}.
Hence, the set
$$
\Gamma^{\prime} = \left\{\left(mi-j, m\left(\left\lceil\frac{rj}{m}\right\rceil-i\right)-j\right)\ \mid \ 1+\left\lfloor\frac{m}{r}\right\rfloor \leq j \leq m-1,  1\leq i \leq  \left\lceil \frac{rj}{m}\right\rceil -1\right\}
$$
is a subset of $G(P_1)\times G(P_2) \cap H(P_1,P_2)$.
The cardinality of $\Gamma^{\prime}$ is
$$
|\Gamma^{\prime}| = \sum_{k=1+ \left\lfloor\frac{m}{r} \right\rfloor }^{m-1} \left( \left\lceil \frac{rk}{m} \right\rceil -1 \right)= \left(\sum_{k=1+ \left\lfloor\frac{m}{r} \right\rfloor }^{m-1} \left\lceil \frac{rk}{m} \right\rceil\right) -\left(m- \left\lfloor \frac{m}{r} \right\rfloor -1\right)$$
$$ = \left( \sum_{k=0}^{m-1} \left\lceil \frac{rk}{m} \right\rceil \right) - \left\lfloor\frac{m}{r} \right\rfloor -\left(m- \left\lfloor \frac{m}{r} \right\rfloor -1\right) =- \sum_{k=0}^{m-1} \left\lfloor \frac{-rk}{m} \right\rfloor -m+1$$
$$ = -\left(m-1\right)\left(-r-1\right)/2 -m+1= \left(m-1\right)\left(r-1\right)/2 = g,
$$
using \cite[Page 94]{GKP}.
Therefore $\Gamma'=\Gamma(P_1,P_2)$ by Lemma \ref{lemmagamma}.
\end{proof}

From Proposition \ref{gammaP1P2} we are able to compute the number of gaps at two totally ramified places in the case $m\equiv1\pmod r$.

\begin{theorem}\label{twopoints}
Let $F/\fq(x)$ be a Kummer extension defined by $y^m=f(x)$, where $f(x)\in\fq[x]$ is separable of degree $r$ and $\gcd(r,m)=1$.
Let $P_\infty\in\bP(F)$ be the pole of $x$ and $P_1\ne P_2$ be two other totally ramified rational places in $F/\fq(x)$. If $m=ur+1$ for some integer $u$, then
\begin{align*}
& | G(P_1,P_2) | = \frac{ur(r-1)(3ur^2-5ur+4r+4u-2)}{12}\text{, and }\\
& | G(P_\infty,P_1) | =  \frac{ur(r-1)(3ur^2-3ur+2r+2)}{12}.
\end{align*}
\end{theorem}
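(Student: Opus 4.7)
The plan is to apply Homma's formula $|G(P,Q)|=\sum_{i=1}^g a_i+\sum_{i=1}^g b_i-r(P,Q)$ from Theorem \ref{numberofgaps} to both pairs $(P_1,P_2)$ and $(P_\infty,P_1)$. Proposition \ref{gammaP1P2} supplies $\Gamma(P_1,P_2)$, and the description of $\Gamma(P_\infty,P_1)$ recalled right before that proposition (from \cite[Theorem 4.3]{MQS2016}) supplies the other. The hypothesis $m=ur+1$ simplifies both descriptions: $\lfloor m/r\rfloor=u$, every valid index can be uniquely written as $ku+\ell$ with $\ell\in\{1,\ldots,u\}$, and under this substitution $\lceil rj/m\rceil=k+1$ (when $j=ku+\ell$) and $\lfloor ri/m\rfloor=k$ (when $i=ku+\ell$). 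Consequently both sets $\Gamma$ split into combinatorial blocks indexed by $(k,\ell)$.

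Using this parameterization I would first evaluate $\sum a_i+\sum b_i$ as polynomial double sums in the block indices; they reduce to the usual identities for $\sum k$, $\sum k^2$ and $\sum k(k+1)$. By symmetry $\sum a_i=\sum b_i$ for $(P_1,P_2)$, while for $(P_\infty,P_1)$ the first sum equals the classical gap-sum of the semigroup $\langle m,r\rangle=H(P_\infty)$. Each sum emerges as an explicit polynomial multiple of $ur(r-1)/12$.

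The real work is the inversion count $r(P,Q)$. For $(P_1,P_2)$, inside each block $(k,\ell)$ the bijection is the involution $i\mapsto k+1-i$, contributing $\binom{k}{2}$ inversions. For pairs of gaps with the same $k$ but different $\ell$, and for pairs with distinct $k$-indices $k_1<k_2$, the size estimates $|\Delta\ell|<u<m$ and $(r-1)u<m$ collapse the condition ``$a<a'$ and $b>b'$'' to arithmetic inequalities on the $i$-indices: within a fixed $k$ one gets $i_1<i_2$, while across blocks one gets either $i_2-i_1\ge k_2-k_1$ or $i_2\le i_1$. Summing a quadratic-in-$k_1$ count over $1\le k_1<k_2\le r-1$ yields $r(P_1,P_2)$ in closed form, and Homma's formula then gives the first identity.

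For $(P_\infty,P_1)$ the same strategy applies, but relies on the identity $A+B=(m-i)(r-1)$, which is constant inside a block (fixed $i$) and strictly decreasing between blocks; this immediately forces every within-block pair to be an inversion. For pairs of distinct blocks the sign of $r\Delta\ell-\Delta k$ controls the outcome; with $m=ur+1$ this dichotomy collapses to $\Delta\ell\ge 1$ versus $\Delta\ell\le 0$, and in each regime the number of ``non-inverting'' pairs $(j,j')$ sits in an interval of width $\Delta k$ or $\Delta k-1$. A double summation over $0\le k<k'\le r-2$ then closes the count. I expect this cross-block tally for $(P_\infty,P_1)$ to be the main obstacle, since it is where the two final formulas diverge and the sign case-split is the only step in the argument that is not entirely routine. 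Substituting the gap-sums and the inversion counts into Homma's formula completes the proof.
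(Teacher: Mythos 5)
Your proposal follows essentially the same route as the paper's proof: both apply Homma's formula to the explicit sets $\Gamma(P_1,P_2)$ and $\Gamma(P_\infty,P_1)$, exploit $m=ur+1$ to write the indices as $ku+\ell$ so that $\lceil rj/m\rceil=k+1$ and $\lfloor ri/m\rfloor=k$, and then count inversions by the same within-block versus cross-block case analysis (including your observations that every within-block pair is an inversion and that the cross-block conditions reduce to the inequalities you state). The plan is correct and matches the paper's argument step for step.
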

\begin{proof}
By Proposition \ref{gammaP1P2}, 
$$
\Gamma(P_1,P_2)=\left\{\left(mi-j, m\left(\left\lceil\frac{rj}{m}\right\rceil-i\right)-j\right)\ \mid \ 1+u \leq j \leq m-1,  1\leq i \leq  \left\lceil \frac{rj}{m}\right\rceil -1\right\}.
$$
Setting $(i_0, j_0)\in\N^2$ with $1+u \leq j_0 \leq m-1$ and $1\leq i_0 \leq \left \lceil \frac{rj_0}{m}\right\rceil -1$; by Theorem \ref{numberofgaps}, we need to count the number $r_{i_0,j_0}$ of pairs $(i_1, j_1)\in\N^2$ such that
\begin{small}
\begin{equation}\label{conditions}
1+u \leq j_1 \leq ru,\, 1\leq i_1 \leq \left \lceil \frac{rj_1}{m}\right\rceil -1,\,
m\left(i_0-i_1\right) < j_0-j_1,\, 
m\left(\left\lceil \frac{rj_1}{m} \right\rceil  -  \left\lceil  \frac{rj_0}{m} \right\rceil  +i_0-i_1 \right) < j_1-j_0 .
\end{equation}
\end{small}
For $h\in\{0,1\}$ write $j_h=k_h u+t_h$ with $k_h \in \left\{ 1, \dots, r-1\right\}$ and $t_h \in \left\{ 1, \dots , u\right\}$.
Then $\left \lceil \frac{rj_h}{m} \right\rceil = k_h+1$.
We split $r_{i_0,j_0}$ in a number of cases:
\begin{itemize}
\item $j_1=j_0$. Then \eqref{conditions} implies $i_0+1 \leq i_1 \leq  k_1$.  
\item $j_1 > j_0$ and $k_1=k_0$. Then \eqref{conditions} implies $1 \leq t_0 \leq u-1$, $t_1 \geq t_0+1$, and $i_0+1 \leq i_1 \leq k_1$. 
\item $j_1 > j_0$ and $k_1>k_0$. Then \eqref{conditions} implies $i_0+k_1-k_0 \leq i_1 \leq k_1$.
\item $j_1 < j_0$ and $k_1 < k_0$. Then \eqref{conditions} implies $1\leq t_0,t_1\leq u$, $1\leq i_0\leq k_1$, and $i_0\leq i_1 \leq k_1$.
\item $j_1 < j_0$ and $k_1 = k_0$. Then \eqref{conditions} implies $2 \leq t_0 \leq u$, $t_1 \leq t_0-1$, and $i_0+1 \leq i_1 \leq k_1$.
\end{itemize}
By direct computation, this yields
\begin{align*}
r(P_1,P_2) & =\sum_{(i_0,j_0)\in\Gamma(P_1,P_2)} r_{i_0,j_0} =\sum_{k_0=1}^{r-1} \sum_{t_0=1}^u \sum_{i_0=1}^{k_0} (k_0-i_0)+
\sum_{k_0=1}^{r-1} \sum_{t_0=1}^{u-1} \sum_{t_1=t_0+1}^u\sum_{i_0=1}^{k_0} (k_0-i_0)\\
&+ \sum_{k_0=1}^{r-2} \sum_{t_0=1}^{u} \sum_{k_1=k_0+1}^{r-1}\sum_{t_1=1}^u \sum_{i_0=1}^{k_0} (k_0-i_0+1)+
\sum_{k_0=2}^{r-1} \sum_{t_0=1}^{u} \sum_{k_1=1}^{k_0-1}\sum_{t_1=1}^u\sum_{i_0=1}^{k_1} (k_1-i_0+1)\\
&+ \sum_{k_0=1}^{r-1} \sum_{t_0=2}^{u} \sum_{t_1=1}^{t_0-1}\sum_{i_0=1}^{k_0} (k_0-i_0)
 = \frac{u^2(r-2)(r-1)r(r+3)}{12}.
\end{align*}

Also, by \cite[Th. 3.2]{MQS2016}, we have
\begin{align}
\sum_{n \in G(P_1)} n & = \sum_{n \in G(P_2)} n = \sum_{j=1+u}^{m-1} \sum_{i=1}^{\left\lceil \frac{rj}{m} \right\rceil-1} \left(mi-j\right)
 =\sum_{k=1}^{r-1} \sum_{t=1}^{u} \sum_{i=1}^{k-1} \left((ur+1)i-(ku+t)\right) \\
&= \frac{ur(r-1)(2r^2u-2ru+2r-u-1)}{12}. \label{gapshere}
\end{align}
Therefore we obtain
$$| G(P_1, P_2)| =\sum_{n \in G(P_1)} n + \sum_{n \in G(P_2)} n - r(P_1,P_2) =\frac{ur(r-1)(3r^2u-5ru+4r+4u-2)}{12}.$$

By \cite[Theorem 4.3]{MQS2016},
$$\Gamma(P_\infty,P_1)=\left\{ \left(mr-mj-ri, m\left(j-1\right)+i\right) \mid 1 \leq i \leq m-1- u , 1\leq j \leq r-1-\left\lfloor \frac{ri}{m} \right\rfloor \right\}.$$
For $(i_0, j_0)\in\N^2$ with $1 \leq i_0 \leq m-1- u$ and $1\leq j_0 \leq r-1-\left\lfloor \frac{ri_0}{m} \right\rfloor$,
as above we need to count the number $s_{i_0,j_0}$ of pairs $(i_1,j_1)\in\N^2$ such that
\begin{small}
\begin{equation}\label{eq1}
1 \leq i_1 \leq m-1- u,\, 1\leq j_1 \leq r-1-\left\lfloor \frac{ri_1}{m} \right\rfloor,\, 
m\left(j_1-j_0\right) < r\left(i_0-i_1\right),\, m\left(j_1-j_0\right) < \left(i_0-i_1\right).
\end{equation}
\end{small}
For $h\in\{0,1\}$ write $i_h=k_h u + t_h$, with $k_h\in\{0,\ldots,r-2\}$ and $t_h\in\{1,\ldots,u\}$.
Then $\left \lfloor \frac{ri_h}{m} \right\rfloor = k_h$.
We split $s_{i_0,j_0}$ in a number of cases:
\begin{itemize}
\item $i_1=i_0$. Then \eqref{eq1} implies $1\leq j_1\leq j_0-1$. 
\item $i_1>i_0$, $k_1>k_0$, and $t_1\leq t_0$. Then \eqref{eq1} implies $k_1-k_0+1\leq j_0\leq r-1-k_0$ and $1\leq j_1\leq k_0-k_1+j_0$.
\item $i_1>i_0$, $k_1\geq k_0$, and $t_1> t_0$. Then \eqref{eq1} implies $k_1-k_0+2\leq j_0\leq r-1-k_0$ and $1\leq j_1\leq k_0-k_1-1+j_0$.
\item $i_1<i_0$ and $k_1<k_0$. Then \eqref{eq1} implies $1\leq j_1\leq j_0$.
\item $i_1<i_0$, $k_1=k_0$ and $t_1<t_0$. Then \eqref{eq1} implies $1\leq j_1\leq j_0$.
\end{itemize}
By direct computation, this yields
\begin{align*}
r(P_\infty, P_1)& =\sum_{(i_0,j_0)\in\Gamma(P_\infty,P_1)} s_{i_0,j_0}
= \sum_{k_0=0}^{r-2}\sum_{t_0=1}^{u} \sum_{j_0=1}^{r-1-k_0}(j_0-1)\\
&+\sum_{k_0=0}^{r-2}\sum_{t_0=1}^{u} \sum_{k_1=k_0+1}^{r-2} \sum_{t_1=1}^{t_0} \sum_{j_0=k_1-k_0+1}^{r-1-k_0}(k_0-k_1+j_0) \\
&+ 
\sum_{k_0=0}^{r-2}\sum_{t_0=1}^{u} \sum_{k_1=k_0}^{r-2} \sum_{t_1=t_0+1}^{u} \sum_{j_0=k_1-k_0+2}^{r-1-k_0}(k_0-k_1-1+j_0)\\
& + \sum_{k_0=0}^{r-2}\sum_{t_0=1}^{u} \sum_{k_1=0}^{k_0-1} \sum_{t_1=1}^{u} \sum_{j_0=1}^{r-1-k_0}j_0+
\sum_{k_0=0}^{r-2}\sum_{t_0=1}^{u}\sum_{t_1=1}^{t_0-1} \sum_{j_0=1}^{r-1-k_0}j_0
=\frac{u(r-1)r(ur^2+r-u-5)}{12}.
\end{align*}
Also, by \cite[Th. 3.2]{MQS2016}, we have
\begin{align*}
\sum_{n \in G(P_\infty)} n & =  \sum_{i=1}^{m-1-u} \sum_{j=1}^{r-1-\left\lfloor \frac{ri}{m} \right\rfloor} \left(mr-mj-ri\right)\\
& =
\sum_{k=0}^{r-2} \sum_{t=1}^{u}\sum_{j=1}^{r-1-k} \left(mr-mj-r(ku+t)\right) 
= \frac{ur(r-1)(2ur^2-ur+r-2)}{12},
\end{align*}
and $\sum_{n \in G(P_1)} n$ was computed in \ref{gapshere}. Therefore we obtain
$$| G(P_1, P_2)| =\sum_{n \in G(P_1)} n - \sum_{n \in G(P_2)} n + r(P_1,P_2) =\frac{ur(r-1)(3r^2u-5ru+4r+4u-2)}{12}.$$
\end{proof}

\begin{remark}
If $\mathcal H$ is the function field of the Hermitian curve defined by $y^{q+1}=x^q+x$ over $\mathbb{F}_{q^2}$, then Theorem {\rm \ref{twopoints}} was already obtained in {\rm \cite[Th. 3.6]{G2001}}. In fact, the places of $\mathcal H$ which are totally ramified in $ H/\mathbb F_{q^2}(x)$ are centered at Weierstrass points of $\mathcal H$.
\end{remark}

\section{Pure gaps at many points and codes}\label{Sec:ManyPoints}

Throughout this section, $F/\fq(x)$ is a Kummer extension defined by $y^m=f(x)$, where $f(x)\in\fq[x]$ is separable of degree $r$ and $\gcd(r,m)=1$. Let $P_\infty\in\bP(F)$ denote the unique pole of $x$, while $P_1,\ldots,P_s$ ($s\geq1$) are other totally ramified places in $F/\fq(x)$ different from $P_\infty$. 
In this section we give arithmetic conditions which characterize the pure gaps at $P_1,\ldots,P_s$ and at $P_\infty,P_1,\ldots,P_s$.
We use this characterization to determine explicit families of pure gaps at many points and apply it to construct AG codes with good parameters.

\begin{proposition}\label{puregapsmanypoints} 
Under the above notation, let $s\leq r$. The $s$-tuple $(a_1, \dots, a_s) \in\N^s$ is a pure gap at $P_1, \dots, P_s$ if and only if, for every $t \in \{0,\ldots ,m - 1\}$, exactly one of the following two conditions is satisfied:
\begin{enumerate}
\item[i)] $\sum_{i=1}^s \left\lfloor \frac{a_i+t}{m}\right\rfloor +\left\lfloor \frac{-rt}{m}\right\rfloor<0$;
\item[ii)] $\sum_{i=1}^s \left\lfloor \frac{a_i+t}{m}\right\rfloor +\left\lfloor \frac{-rt}{m}\right\rfloor \geq 0$ and $\left\lfloor \frac{a_i+t}{m}\right\rfloor=\left\lfloor \frac{a_i-1+t}{m}\right\rfloor$, for all $i=1, \dots ,s$.
\end{enumerate}
\end{proposition}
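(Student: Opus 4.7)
The plan is to combine Lemma~\ref{purecondition} with Maharaj's decomposition (Theorem~\ref{ThMaharaj}) to turn the pure-gap condition into an arithmetic statement on $\bP^1$. By Lemma~\ref{purecondition}, $(a_1,\dots,a_s)$ is a pure gap at $P_1,\dots,P_s$ if and only if $\ell(D)=\ell(D')$, where $D=\sum_{i=1}^s a_iP_i$ and $D'=\sum_{i=1}^s (a_i-1)P_i$. Because each $P_i$ is totally ramified, it is fixed pointwise by $\mathrm{Gal}(F/\fq(x))$, so $D$ and $D'$ are Galois-invariant and Theorem~\ref{ThMaharaj} applies to both.

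Next I would compute the restrictions appearing in Maharaj's formula. From $(y)=\sum_{\alpha:f(\alpha)=0} P_{\alpha}-rP_{\infty}$, the total ramification $e=m$ at every such place, and $\lfloor t/m\rfloor=0$ for $0\le t\le m-1$, contributions from roots of $f$ other than $\alpha_1,\dots,\alpha_s$ vanish, giving
\[
[D+(y^t)]\big|_{\fq(x)} \;=\; \sum_{i=1}^s \left\lfloor\frac{a_i+t}{m}\right\rfloor P_{\alpha_i}^{(0)} + \left\lfloor\frac{-rt}{m}\right\rfloor P_{\infty}^{(0)},
\]
where $P_{\alpha_i}^{(0)}$ and $P_{\infty}^{(0)}$ are the places of $\fq(x)$ below $P_i$ and $P_\infty$, all of degree one. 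Hence this divisor has degree $n_t:=\sum_{i=1}^s \lfloor(a_i+t)/m\rfloor+\lfloor-rt/m\rfloor$, and since it lives on $\bP^1$, its Riemann-Roch dimension is $\max\{0,\,n_t+1\}$. Writing $n_t'$ for the analogous quantity attached to $D'$, a direct check gives $n_t-n_t' = \#\{i:m\mid a_i+t\}\in\{0,1,\dots,s\}$.

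Finally, Theorem~\ref{ThMaharaj} yields
\[
\ell(D)-\ell(D') \;=\; \sum_{t=0}^{m-1}\bigl[\max\{0,n_t+1\}-\max\{0,n_t'+1\}\bigr],
\]
a sum of non-negative terms; it vanishes if and only if every summand does. A case distinction on the signs of $n_t$ and $n_t'$ (recalling $n_t\ge n_t'$) shows the $t$-th summand vanishes precisely when either $n_t<0$ (which forces $n_t'<0$ as well, condition (i)) or $n_t\ge 0$ together with $n_t=n_t'$; in the latter case, $n_t=n_t'$ unwinds to the floor identities $\lfloor(a_i+t)/m\rfloor=\lfloor(a_i-1+t)/m\rfloor$ for every $i$, which is condition (ii). Since (i) and (ii) are mutually exclusive by the sign of $n_t$, this gives the claimed ``exactly one'' dichotomy.

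The main technical point, though only a bookkeeping one, is the justification that places of $\fq(x)$ lying under roots of $f$ outside $\{\alpha_1,\ldots,\alpha_s\}$, which need not be rational, contribute nothing to the restriction; this is handled uniformly by total ramification and $0\le t<m$. Everything else is the standard dimension formula $\ell(E)=\max\{0,\deg E+1\}$ on $\bP^1$ and arithmetic with floor functions.
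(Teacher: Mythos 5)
Your proof is correct and follows essentially the same route as the paper: Lemma~\ref{purecondition} plus Maharaj's decomposition (Theorem~\ref{ThMaharaj}), reduction to degree computations on $\fq(x)$ via $\ell(E)=\max\{0,\deg E+1\}$ in genus $0$, and the observation that the resulting sum of non-negative differences vanishes term by term. The only (welcome) difference is that you spell out more explicitly the monotonicity $n_t\ge n_t'$ and the vanishing of the contributions from the ramified places outside $\{P_1,\dots,P_s\}$.
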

\begin{proof}
Let $P_1,\ldots,P_r$, be all the places of $F$ which are totally ramified in $F/\fq(x)$ except $P_\infty$, that is, $P_i$ is the zero of $x-\alpha_i$, where $f(x)=\prod_{i=1}^r (x-\alpha_i)$ is the separable polynomial defining $F$ by $y^m=f(x)$.
Then the divisor of $y$ in $F$ is $(y)=\sum_{i=1}^{r}P_i-rP_{\infty}$, and hence, for any $t\in\{0,\ldots,m-1\}$,
$$\sum_{i=1}^s a_iP_i+(y^t)=\sum_{i=1}^s (a_i+t)P_i+\sum_{i=s+1}^{r}tP_i- rtP_{\infty}\,.$$
Let $Q_1,\ldots,Q_r,Q_\infty$ be the places of $\fq(x)$ lying under $P_1,\ldots,P_r,P_\infty$, respectively. Then
$$
\left[\sum_{i=1}^s a_iP_i+\left(y^t\right) \right]\Big|_{{K(x)}} = \sum_{i=1}^s \left \lfloor \frac{a_i+t}{m}\right\rfloor Q_i +\left \lfloor \frac{-rt}{m}\right\rfloor Q_{\infty}.
$$
Since
$$
\mathcal{L}(\sum_{i=1}^s a_iP_i)=\bigoplus_{t=0}^{m-1}\mathcal{L}\left(\left[\sum_{i=1}^s a_iP_i+\left(y^t\right) \right]\Big |_{K(x)} \right)y^t,
$$
by Theorem \ref{ThMaharaj}, we have
$$
\ell\left(\sum_{i=1}^s a_iP_i\right)=\sum_{t=0}^{m-1}\ell\left( \sum_{i=1}^s \left \lfloor \frac{a_i+t}{m}\right\rfloor Q_i+\left \lfloor \frac{-rt}{m}\right\rfloor Q_{\infty}\right),
$$
$$
\ell\left(\sum_{i=1}^s (a_i-1)P_i\right)=\sum_{t=0}^{m-1} \ell\left(  \sum_{i=1}^s \left \lfloor \frac{a_i-1+t}{m}\right\rfloor Q_i+\left \lfloor \frac{-rt}{m}\right\rfloor Q_{\infty}\right).
$$
By Lemma \ref{purecondition}, $(a_1, \dots, a_s)$ is a pure gap at $P_1,\ldots,P_s$ if and only if 
$$
\ell\left( \sum_{i=1}^s \left \lfloor \frac{a_i+t}{m}\right\rfloor Q_i +\left \lfloor \frac{-rt}{m}\right\rfloor Q_{\infty}\right)-\ell\left( \sum_{i=1}^s \left \lfloor \frac{a_i-1+t}{m}\right\rfloor Q_i+\left \lfloor \frac{-rt}{m}\right\rfloor Q_{\infty}\right)=0
$$
for all $t\in \{0,\ldots,m-1\}$. Since $\fq(x)$ has genus $0$, this happens if and only if, for all $t\in \{0,\ldots,m-1\}$, either
$$\sum_{i=1}^s \left \lfloor \frac{a_i+t}{m}\right\rfloor+\left \lfloor \frac{-rt}{m}\right\rfloor<0$$
or
$$\sum_{i=1}^s \left \lfloor \frac{a_i+t}{m}\right\rfloor+\left \lfloor \frac{-rt}{m}\right\rfloor \geq 0 \quad{\rm and }\quad \sum_{i=1}^s \left \lfloor \frac{a_i+t}{m}\right\rfloor  = \sum_{i=1}^s \left \lfloor \frac{a_i-1+t}{m}\right\rfloor.$$
\end{proof}

\begin{proposition}\label{puregapsinfty}
Let $s\leq r$, then an $(s+1)$-tuple $(a_0, a_1, \dots, a_s)\in \mathbb{N}^{s+1}$ is a pure gap at $P_\infty, P_1, \dots, P_s$ if and only if, for every $t \in \{0,\ldots ,m - 1\}$, exactly one of the following two conditions is satisfied:
\begin{enumerate}
\item[i)] $\sum_{i=1}^s \left\lfloor \frac{a_i+t}{m} \right\rfloor + \left\lfloor \frac{a_0-rt}{m}\right \rfloor  <0 $;
\item[ii)] $\sum_{i=1}^s \left\lfloor \frac{a_i+t}{m} \right\rfloor + \left\lfloor \frac{a_0-rt}{m}\right \rfloor  \geq 0 $, $\left\lfloor \frac{a_0-rt}{m}\right \rfloor= \left\lfloor \frac{a_0-1-rt}{m}  \right\rfloor $ and $ \left\lfloor \frac{a_i+t}{m} \right\rfloor = \left\lfloor \frac{a_i-1+t}{m} \right\rfloor$ for $i=1, \dots, s$.
\end{enumerate}
\end{proposition}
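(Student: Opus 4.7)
The plan is to mimic the proof of Proposition \ref{puregapsmanypoints}, with the extra complication that the coefficient $a_0$ at $P_\infty$ must be tracked through the divisor $(y^t)$. Since $(y)=\sum_{i=1}^{r}P_i-rP_\infty$, for each $t\in\{0,\ldots,m-1\}$ one has
\[
a_0P_\infty+\sum_{i=1}^{s}a_iP_i+(y^t)=(a_0-rt)P_\infty+\sum_{i=1}^{s}(a_i+t)P_i+\sum_{i=s+1}^{r}tP_i,
\]
and because each $P_i$ and $P_\infty$ is totally ramified of index $m$ over its image in $\fq(x)$, taking the restriction to $\fq(x)$ yields
\[
D_t:=\Bigl\lfloor\tfrac{a_0-rt}{m}\Bigr\rfloor Q_\infty+\sum_{i=1}^{s}\Bigl\lfloor\tfrac{a_i+t}{m}\Bigr\rfloor Q_i,
\]
where the contributions for $i>s$ vanish because $0\le t<m$. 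Applying Theorem \ref{ThMaharaj} to the Galois-invariant divisors $a_0P_\infty+\sum_{i=1}^{s}a_iP_i$ and $(a_0-1)P_\infty+\sum_{i=1}^{s}(a_i-1)P_i$ (invariant because the ramification is total) gives
\[
\ell\Bigl(a_0P_\infty+\sum_{i=1}^{s}a_iP_i\Bigr)=\sum_{t=0}^{m-1}\ell(D_t),\qquad \ell\Bigl((a_0-1)P_\infty+\sum_{i=1}^{s}(a_i-1)P_i\Bigr)=\sum_{t=0}^{m-1}\ell(D_t'),
\]
where $D_t'$ is obtained from $D_t$ by replacing $a_0$ and $a_i$ by $a_0-1$ and $a_i-1$ respectively.

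Next, I invoke Lemma \ref{purecondition}: the tuple is a pure gap at $P_\infty,P_1,\ldots,P_s$ if and only if the two sums above coincide. Since $D_t\succeq D_t'$ coordinatewise, each summand satisfies $\ell(D_t)\ge \ell(D_t')$, so the global equality is equivalent to summand-wise equality $\ell(D_t)=\ell(D_t')$ for every $t$. Because $\fq(x)$ has genus $0$, the dimension reduces to $\ell(E)=\max(0,\deg E+1)$, and the per-$t$ equality then splits into two mutually exclusive alternatives: either $\deg D_t<0$, in which case $\deg D_t'<0$ automatically (so both dimensions vanish) --- this is case (i); or $\deg D_t\ge 0$, in which case $\ell(D_t)=\ell(D_t')$ forces $\deg D_t=\deg D_t'$, which, combined with the coefficient-wise inequalities $D_t\succeq D_t'$, forces $D_t=D_t'$ --- this is case (ii).

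The argument is essentially routine once the restriction to $\fq(x)$ is written down. The one delicate point, which I would highlight as the main obstacle, is the passage from ``same degree'' to ``same divisor'' in case (ii): since decreasing $a_i$ or $a_0$ by one can drop each floor by at most $1$, the total degree drop is $0$ exactly when \emph{every} individual floor is unchanged, which yields precisely the pair of equalities $\lfloor(a_0-rt)/m\rfloor=\lfloor(a_0-1-rt)/m\rfloor$ and $\lfloor(a_i+t)/m\rfloor=\lfloor(a_i-1+t)/m\rfloor$ stated in (ii). Care with this bookkeeping, plus the observation that the sign ``$\le$'' between degrees immediately propagates to ``$\le$'' between $\ell$ values (covering the borderline $\deg D_t=-1$), is all that is required.
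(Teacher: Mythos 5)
Your proof is correct and follows exactly the route the paper intends: the paper omits this proof, stating only that it is ``very similar to the proof of Proposition \ref{puregapsmanypoints}'', and your argument is precisely that adaptation, tracking the coefficient $a_0-rt$ at $P_\infty$ through Maharaj's decomposition and Lemma \ref{purecondition}. Your explicit justification of the step from ``equal degrees'' to ``equal divisors'' in case (ii) (each floor drops by $0$ or $1$, so the total drop is $0$ iff every floor is unchanged) is a detail the paper glosses over even in the model proof, and it is handled correctly.
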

\begin{proof}
The proof is very similar to the proof of Proposition \ref{puregapsmanypoints} and it is omitted.
\end{proof}

We now present three families of pure gaps at two points for $m\equiv1\pmod r$.
 
\begin{proposition}\label{puregapstwopoints}
Suppose that $m=ur+1$ for some integer $u$. Then
\begin{enumerate}
\item[i)] $((r-1)m-2r,1)$ is a pure gap at $P_{\infty},P_1$;
\item[ii)] $((r-2)m-r,b)$, with $b\in \{1,\ldots,u+1\}$ are pure gaps at $P_{\infty},P_1$;
\item[iii)] $((r-3)m+1+\alpha ,1+\beta)$, with $\alpha\in \{0,\ldots,2u-1\}$ and $\beta \in \{0,\ldots,u-1\}$ are pure gaps at $P_1,P_2$.
\end{enumerate} 
\end{proposition}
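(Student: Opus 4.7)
The plan is to verify each of the three claims by direct application of the arithmetic characterizations provided by Proposition \ref{puregapsinfty} (for (i) and (ii), with $s=1$) and Proposition \ref{puregapsmanypoints} (for (iii), with $s=2$). In each case, for every $t \in \{0,\ldots,m-1\}$ one must confirm that exactly one of the two listed conditions is satisfied. Since condition i) requires the floor-sum to be strictly negative and condition ii) requires it to be non-negative, the two alternatives are mutually exclusive; so it suffices to show that for every $t$, at least one of them holds.

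The central reduction is the observation that a floor-equality of the form $\lfloor (a+t)/m\rfloor=\lfloor (a-1+t)/m\rfloor$ fails precisely when $m\mid (a+t)$, and similarly $\lfloor (a_0-rt)/m\rfloor=\lfloor (a_0-1-rt)/m\rfloor$ fails precisely when $m\mid(a_0-rt)$. Because $\gcd(r,m)=1$, each coefficient contributes a unique "exceptional" $t\in\{0,\ldots,m-1\}$ at which the associated equality fails. At every non-exceptional $t$, all the equalities in condition ii) hold automatically, and the dichotomy between sum $<0$ and sum $\geq 0$ ensures that either i) or ii) is in force. The proof therefore reduces to checking that at each exceptional $t$ the floor-sum is strictly negative, placing us in condition i).

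For (i) the exceptional values are $t=m-1$ (from $a_1+t\equiv 0$) and $t=m-2$ (from $a_0-rt\equiv -2r-rt\equiv 0$); using $m=ur+1$, direct substitution gives floor-sum $=-1$ at both, so condition i) holds. For (ii) the exceptional $t$ are $m-b$ and $m-1$; the hypothesis $b\le u+1$ is exactly what is needed to ensure $r(b-1)\le ru=m-1<m$, which forces $\lfloor (a_0-rt)/m\rfloor=-2$ at $t=m-b$, giving floor-sum $=-1$, and the case $t=m-1$ is analogous. For (iii) the exceptional values are $t=m-1-\alpha$ (from $a_1+t\equiv 0$) and $t=m-1-\beta$ (from $a_2+t\equiv 0$). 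One splits into sub-cases according to the sign of $\alpha-\beta$ and according to whether $\alpha\le u-1$ or $\alpha\ge u$, the latter controlling $\lfloor r(1+\alpha)/m\rfloor\in\{0,1\}$. In each of the resulting cases the floor-sum $\sum_{i=1}^{2}\lfloor(a_i+t)/m\rfloor+\lfloor -rt/m\rfloor$ evaluates to $-1$ or $-2$, and the prescribed ranges $\alpha\in\{0,\ldots,2u-1\}$ and $\beta\in\{0,\ldots,u-1\}$ are exactly those for which $r(1+\alpha)$ and $r(1+\beta)$ stay in the manageable intervals $[0,m)$ or $[m,2m)$.

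The only real obstacle is the bookkeeping for (iii), where four sub-cases must be treated individually, and one must check that the intersection case $\alpha=\beta$ (where both exceptional $t$ coincide) still gives a negative floor-sum. Beyond this case analysis, no new conceptual ingredient is required beyond the two characterizations already established.
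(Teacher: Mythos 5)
Your proposal is correct and follows essentially the same route as the paper's own proof: apply Propositions \ref{puregapsinfty} and \ref{puregapsmanypoints}, observe that the floor-equalities in condition ii) can only fail at the finitely many $t$ with $m\mid(a_i+t)$ (resp.\ $m\mid(a_0-rt)$), and verify that at each such exceptional $t$ the floor-sum is strictly negative; your identification of the exceptional values ($t=m-1,m-2$ for (i), $t=m-b,m-1$ for (ii), $t=m-1-\alpha,m-1-\beta$ for (iii)) and the resulting sums $-1$ or $-2$ all match the paper's computations. The only difference is presentational: you state the reduction to exceptional $t$ explicitly up front, which the paper carries out implicitly case by case.
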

\begin{proof}
Let $a= rm-m-2r$ and $t\in \{0,\ldots,m-1\}$. We have
$\left\lfloor \frac{a-rt}{m}\right\rfloor \neq \left\lfloor \frac{a-1-rt}{m}\right\rfloor$ if and only if $m$ divides $a-rt=(r-1)m-r(t+2)$, that is $t=m-2$.
Also, $t=m-2$ implies $\left\lfloor \frac{a-qt}{q^{\ell}+1}\right\rfloor=-1$.
For any $t\in \{0,\ldots,m-2\}$ we have $\left\lfloor \frac{1+t}{m}\right\rfloor =\left\lfloor \frac{t}{m}\right\rfloor=0$.
We conclude that for any $t\in\{0,\ldots,m-2\}$ either 
$\left\lfloor \frac{a-rt}{m}\right\rfloor+\left\lfloor \frac{1+t}{m}\right\rfloor<0$ or $\left\lfloor \frac{a-rt}{m}\right\rfloor+\left\lfloor \frac{1+t}{m}\right\rfloor=\left\lfloor \frac{a-1-rt}{m}\right\rfloor+\left\lfloor \frac{t}{m}\right\rfloor.$
For $t=m-1$, $\left\lfloor \frac{a-rt}{m}\right\rfloor+\left\lfloor \frac{1+t}{m}\right\rfloor=-2+1=-1<0.$
By Proposition \ref{puregapsinfty}, $(a,1)$ is a pure gap at $P_\infty, P_1$. 

Now let $a=rm-2m-r$, $b\in\{1,\ldots,u+1\}$, and $t\in \{0,\ldots,m-1\}$. We have that $\left\lfloor \frac{b+t}{m}\right\rfloor\in\{0,1\}$, and $\left\lfloor \frac{b+t}{m}\right\rfloor=1$ if and only if 
$t+b\geq m$, that is $t \in \{m-b,\dots, m-1\}$. 
In this case,
$$\left\lfloor \frac{a-rt}{m}\right\rfloor = \left\lfloor \frac{rm-2m-r-rt}{m}\right\rfloor =-2 +\left\lfloor \frac{rm-r-rt}{m}\right\rfloor =-2,$$
since $0\leq rm-r-rt\leq r(b-1) \leq ru < m $.
Hence, for all $t \in \{m-b,\dots, m-1\}$,
$$\left\lfloor \frac{a-rt}{m}\right\rfloor+\left\lfloor \frac{b+t}{m}\right\rfloor=-2+1<0.$$
For $t \in \{0,\dots, m-b-1\}$, we have that
$$\left\lfloor \frac{a-rt}{m}\right\rfloor+\left\lfloor \frac{b+t}{m}\right\rfloor=
\left\lfloor \frac{a-rt}{m}\right\rfloor=
\left\lfloor \frac{a-1-rt}{m}\right\rfloor=\left\lfloor \frac{a-1-rt}{m}\right\rfloor+\left\lfloor \frac{b-1+t}{m}\right\rfloor.$$
By Proposition \ref{puregapsinfty}, $(a,b)$ is a pure gap at $P_\infty, P_1$.

Finally let $t \in \{ 0, \dots, m-1 \}$ and $(a_\alpha, b_\beta)=((r-3)m+1+\alpha,1+\beta)$ with $ \alpha\in \{0,\ldots,2u-1\}$ and $\beta\in \{0,\ldots,u-1\}$.
Note that $\left\lfloor \frac{a_{\alpha}+t}{m}\right\rfloor \neq \left\lfloor \frac{a_{\alpha}-1+t}{m}\right\rfloor$ if and only if 
$t=m-1-\alpha$, and $\left\lfloor \frac{b_{\alpha}+t}{m}\right\rfloor \neq \left\lfloor \frac{b_{\alpha}-1+t}{m}\right\rfloor$ if and only if $t=m-1-\beta$.
Therefore,
$$\left\lfloor \frac{a_{\alpha}+t}{m}\right\rfloor +\left\lfloor \frac{b_{\alpha}+t}{m}\right\rfloor \neq \left\lfloor \frac{a_{\alpha}-1+t}{m}\right\rfloor+\left\lfloor \frac{b_{\alpha}-1+t}{m}\right\rfloor$$
if and only if $t=m-1-\alpha$ or $t=m-1-\beta$.

Suppose $t=m-1-\alpha$. Then
\begin{align*}
& \left\lfloor \frac{-rt}{m}\right\rfloor=-r+\left\lfloor \frac{r(1+\alpha)}{m}\right\rfloor
= \left\{\begin{array}{ll} -r,& \alpha \leq u-1 \\ -r +1,& \alpha\geq u\\ \end{array} \right. ,\\
&\left\lfloor \frac{a_{\alpha}+t}{m}\right\rfloor=r-2, \quad \left\lfloor \frac{b_{\beta}+t}{m}\right\rfloor=
1+ \left\lfloor \frac{\beta - \alpha}{m}\right\rfloor = \left\{ \begin{array}{ll} 1,& \text{ for }\beta\geq \alpha\\ 0, & \text{ for } \beta<\alpha\\ \end{array} \right..
\end{align*}
If $\alpha\geq u$, then
$$\left\lfloor \frac{-rt}{m}\right\rfloor+\left\lfloor \frac{a_{\alpha}+t}{m}\right\rfloor+\left\lfloor \frac{b_{\beta}+t}{m}\right\rfloor=(-r+1)+(r-2)+0<0;$$
if $\alpha\leq u-1$, then
$$\left\lfloor \frac{-rt}{m}\right\rfloor+\left\lfloor \frac{a_{\alpha}+t}{m}\right\rfloor+\left\lfloor \frac{b_{\beta}+t}{m}\right\rfloor\leq-r+(r-2)+1<0.$$

Suppose $t=m-1-\beta$. Then
\begin{align*}
&\left\lfloor \frac{-rt}{m}\right\rfloor=
-r+\left\lfloor \frac{r(1+\beta)}{m}\right\rfloor=-r,\\
& \left\lfloor \frac{a_{\alpha}+t}{m}\right\rfloor=
r-2+\left\lfloor \frac{\alpha-\beta}{m}\right\rfloor=
\left\{ \begin{array}{ll} r-3, & \text{ for } \alpha<\beta\\ r-2, & \text{ for } \alpha\geq \beta\\\end{array}\right., \qquad \left\lfloor \frac{b_{\beta}+t}{m}\right\rfloor=1.
\end{align*}

Hence,
\begin{align*}
\left\lfloor \frac{-rt}{m}\right\rfloor+\left\lfloor \frac{a_{\alpha}+t}{m}\right\rfloor+\left\lfloor \frac{b_{\beta}+t}{m}\right\rfloor\leq -r +(r-2)+1<0.
\end{align*}
The thesis follows from Proposition \ref{puregapsmanypoints}.
\end{proof}

The following results present two families of pure gaps at many points for $m\equiv1\pmod r$.

\begin{proposition}\label{manypoints1}
Suppose that $m=ur+1$ for some integer $u$, $s<r$, and $\alpha_i\in\{0,\ldots,(s+1-i)u-1\}$ for $i=1,\ldots,s$.
Then $(a_1,\ldots,a_s)=((r-s-1)m+1+\alpha_1,1+\alpha_2,\ldots,1+\alpha_s)$ is a pure gap at $P_1,\ldots,P_s$.
\end{proposition}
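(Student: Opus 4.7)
The plan is to apply the criterion of Proposition \ref{puregapsmanypoints} and check, for each $t \in \{0, \ldots, m-1\}$, that one of the two alternatives (i), (ii) holds. Since $a_1 \equiv 1+\alpha_1 \pmod m$ and $a_i = 1+\alpha_i$ for $i \geq 2$, the equality $\lfloor (a_i+t)/m\rfloor = \lfloor (a_i-1+t)/m\rfloor$ is violated only at the \emph{critical} values $t_j := m-1-\alpha_j$, $j = 1, \ldots, s$. At every non-critical $t$ these equalities hold for all $i$, so condition (i) or (ii) is satisfied automatically according to the sign of the sum. Hence the proof reduces to showing
\[
S(t_j) := \sum_{i=1}^s \left\lfloor \frac{a_i+t_j}{m}\right\rfloor + \left\lfloor \frac{-rt_j}{m}\right\rfloor < 0 \qquad \text{for every } j \in \{1,\ldots,s\}.
\]

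To evaluate $S(t_j)$ I would set $k_j := \lceil (\alpha_j+1)/u \rceil$ and, using $m = ur+1$ together with $\alpha_j \leq (s+1-j)u - 1$, verify that $k_j \in \{1,\ldots,s+1-j\}$ and $\lfloor -rt_j/m \rfloor = -r + k_j - 1$. Since $|\alpha_i - \alpha_j| < m$, the individual floor terms are straightforward: $\lfloor (a_1+t_j)/m\rfloor$ equals $r-s$ or $r-s-1$ according to the sign of $\alpha_1-\alpha_j$, $\lfloor (a_j+t_j)/m\rfloor = 1$, and $\lfloor (a_i+t_j)/m\rfloor \in \{0,1\}$ for the remaining indices according to whether $\alpha_i \geq \alpha_j$ or not. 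Summing and collapsing the $i=1$ indicator into the running count yields
\[
S(t_j) = k_j - 1 - s + N_j^{\geq}, \qquad N_j^{\geq} := \#\{i \neq j : \alpha_i \geq \alpha_j\}.
\]

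The crux is the bound $N_j^{\geq} \leq s - k_j$. The definition of $k_j$ forces $\alpha_j \geq (k_j-1)u$, and combining this with $\alpha_i \leq (s+1-i)u - 1$ shows that any index $i$ with $\alpha_i \geq \alpha_j$ must satisfy $i \leq s+1-k_j$. Since $k_j \leq s+1-j$ gives $j \leq s+1-k_j$, at most $s - k_j$ other indices lie in this range, hence $S(t_j) \leq -1 < 0$. The main obstacle will be this counting argument: the correct choice of $k_j$ and the exploitation of the telescoping bounds $\alpha_i \leq (s+1-i)u - 1$ are what force the inequality to saturate at exactly $-1$, with no slack; the floor computations themselves are routine once the critical $t_j$ have been isolated.
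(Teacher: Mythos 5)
Your proposal is correct and follows essentially the same route as the paper's proof: both reduce to Proposition \ref{puregapsmanypoints}, isolate the critical values $t=m-1-\alpha_j$, compute $\left\lfloor \frac{-rt}{m}\right\rfloor=-r+k_j-1$ (your $k_j-1$ is the paper's $h$ with $hu\leq\alpha_j<(h+1)u$), and use the constraint $\alpha_i\leq(s+1-i)u-1$ to bound the number of indices with $\alpha_i\geq\alpha_j$, forcing the sum to be at most $-1$. The only difference is trivial bookkeeping (you count all $i\neq j$ at once where the paper separates $i=1$ from $i\geq 2$), so there is nothing substantive to add.
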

\begin{proof}
Suppose there exist $t\in\{0,\ldots,m-1\}$ and $j\in\{1,\ldots,s\}$ such that $\left\lfloor\frac{a_j+t}{m}\right\rfloor\ne\left\lfloor\frac{a_j-1+t}{m}\right\rfloor$. Thus $t=m-1-\alpha_j$.
Let $h\in\{0,\ldots,r-2\}$ be such that $hu\leq\alpha_j< (h+1)u$. We have
\begin{align*}
&\left\lfloor \frac{-rt}{m}\right\rfloor=\left\lfloor \frac{-r(m-1-\alpha_j)}{m}\right\rfloor=-r+\left\lfloor \frac{r(1+\alpha_j)}{m}\right\rfloor
= -r+h,\\
&\left\lfloor \frac{a_{1}+t}{m}\right\rfloor=\left\lfloor \frac{(r-s-1)m+1+\alpha_1+m-1-\alpha_j}{m}\right\rfloor=\left\{ \begin{array}{ll} r-s,& \alpha_1\geq \alpha_j\\ r-s-1, & \alpha_1< \alpha_j\\ \end{array} \right.,
\end{align*}
and, for $i>1$,
$$\left\lfloor \frac{a_{i}+t}{m}\right\rfloor=\left\lfloor \frac{1+\alpha_i+m-1-\alpha_j}{m}\right\rfloor=\left\{ \begin{array}{ll} 0,&  \alpha_i< \alpha_j\\ 1, & \alpha_i\geq \alpha_j\\ \end{array} \right..$$
Since
$$ |\{i\in\{2,\ldots,s\}:\alpha_i\geq\alpha_j\}|\leq s-1-|\{i\in\{2,\ldots,s\}: (s+1-i)h-1<uh\}|=s-1-h, $$
this implies that
$$\left\lfloor \frac{-rt}{m}\right\rfloor+\left\lfloor \frac{a_{1}+t}{m}\right\rfloor+\sum_{i=2}^m\left\lfloor \frac{a_{i}+t}{m}\right\rfloor \leq (-r+h)+(r-s)+(s-1-h)<0. $$
Hence, the thesis follows by Proposition \ref{puregapsmanypoints}.
\end{proof}

\begin{proposition}\label{manypoints2}
Suppose that $m=ur+1$ for some integer $u$, $s<r-1$, $\alpha\in \{0,\ldots,s\}$, and $\beta_i\in \{0,\ldots,iu-1\}$ for $i\in\{1,\ldots,s\}$.
Then $(a_0,a_1,\ldots,a_s)=((r-s-1)m-r+\alpha,1+\beta_1,\ldots,1+\beta_s)$ is a pure gap at $P_{\infty},P_1,\ldots,P_s$.
\end{proposition}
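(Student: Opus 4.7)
The plan is to apply Proposition \ref{puregapsinfty} to $(a_0,a_1,\ldots,a_s)$, verifying for each $t\in\{0,\ldots,m-1\}$ that one of conditions (i), (ii) holds. First I would isolate the values of $t$ at which the equalities in (ii) might fail. For $i\in\{1,\ldots,s\}$ one has $\lfloor(a_i+t)/m\rfloor = \lfloor(a_i-1+t)/m\rfloor$ unless $t=m-1-\beta_i$; and, since $\gcd(r,m)=1$ with $ur\equiv -1\pmod{m}$, the equality $\lfloor(a_0-rt)/m\rfloor=\lfloor(a_0-1-rt)/m\rfloor$ fails iff $m\mid a_0-rt$, which, after substituting $a_0=(r-s-1)m-r+\alpha$ and inverting $r$ modulo $m$, forces the unique value $t=m-1-u\alpha$ (well-defined since $u\alpha\leq us<m-1$). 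For every other $t$ all equalities in (ii) hold automatically, so nothing more needs to be checked. Thus it suffices to prove that at each of these finitely many ``bad'' $t$ the sum $\Sigma(t):=\sum_{i=1}^s\lfloor(a_i+t)/m\rfloor+\lfloor(a_0-rt)/m\rfloor$ is strictly negative, placing $t$ into case (i).

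For a bad $t$ of the first kind, write $t=m-1-\beta_j$ for some $j\in\{1,\ldots,s\}$ and decompose $\beta_j=hu+\rho$ with $h=\lfloor\beta_j/u\rfloor\in\{0,\ldots,j-1\}$ and $\rho\in\{0,\ldots,u-1\}$. Using $m=ur+1$, a short computation in the style of the proof of Proposition \ref{puregapstwopoints} yields $\lfloor(a_i+t)/m\rfloor=1$ exactly when $\beta_i\geq\beta_j$ (for $i\geq 1$), and $\lfloor(a_0-rt)/m\rfloor\leq h-s-1$; the latter bound is obtained by expanding $a_0-rt=-(s+1)m+\alpha+r\beta_j=(h-s-1)m+(\alpha-h+r\rho)$ and noting that $\alpha-h+r\rho\in[0,m)$ except possibly when $\rho=0$ and $h>\alpha$, where the floor decreases by one. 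The key combinatorial observation is that $\beta_i\leq iu-1$ together with $\beta_i\geq\beta_j\geq hu$ force $i\geq h+1$, so $|\{i:\beta_i\geq\beta_j\}|\leq s-h$, and therefore $\Sigma(t)\leq(h-s-1)+(s-h)=-1<0$.

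For a bad $t$ of the second kind, $t=m-1-u\alpha$, direct substitution collapses $a_0-rt$ to $(\alpha-s-1)m$, so $\lfloor(a_0-rt)/m\rfloor=\alpha-s-1$, while $\lfloor(a_i+t)/m\rfloor$ equals $1$ exactly when $\beta_i\geq u\alpha$. Since $\beta_i\leq iu-1$, the inequality $\beta_i\geq u\alpha$ forces $i\geq\alpha+1$, bounding $|\{i:\beta_i\geq u\alpha\}|$ by $s-\alpha$. Hence $\Sigma(t)\leq(\alpha-s-1)+(s-\alpha)=-1<0$. A $t$ that is simultaneously bad of both kinds (which happens exactly when some $\beta_j=u\alpha$) is covered automatically by either argument, since each already produces the strict inequality.

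The main obstacle is the careful book-keeping in the first case, where the parameter $h$ can be as large as $s-1$ and the edge subcase $\rho=0$, $h>\alpha$ costs an extra $-1$ in the floor. The hypothesis $s<r-1$ is essential here: it forces $h\leq r-3$, which in turn keeps $r(\rho+1)-h$ strictly inside $(0,m)$, so no unexpected carries arise in the computation of $\lfloor r(1+\beta_j)/m\rfloor=h$, and the uniform slack of $-1$ survives in every subcase.
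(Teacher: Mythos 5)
Your proof is correct and follows essentially the same route as the paper's: apply Proposition \ref{puregapsinfty}, isolate the finitely many $t$ at which a floor equality can fail (namely $t=m-1-\beta_j$ and $t=m-1-u\alpha$), and show the relevant sum is at most $-1$ there via the counting bound $|\{i:\beta_i\geq\beta_j\}|\leq s-h$. Your treatment is in fact slightly more careful than the paper's in the edge subcase $\rho=0$, $h>\alpha$, where the paper asserts $\lfloor(a_0-rt)/m\rfloor=-s-1+h$ as an equality while only the inequality $\leq$ is guaranteed (and suffices).
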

\begin{proof}

Let $t\in\{0,\ldots,m-2\}$, so that $t=ku+z$ with $k\in\{0,\ldots,r-1\}$ and $z\in\{0,\ldots,u-1\}$.

Suppose $\left\lfloor \frac{a_0-rt}{m} \right\rfloor \ne \left\lfloor \frac{a_0-1-rt}{m} \right\rfloor$.
Then $m\mid(a_0-rt)=(r-s-k-1)m+\alpha+k-r(z+1)$. Since $|\alpha+k-r(z+1)|<m$, this implies $\alpha+k=r(z+1)$, whence $r\mid(\alpha+k)$. As $0\leq\alpha,k\leq r-1$, and $r(z+1)>0$, we have that $\alpha+k=r$ and $z=0$. Hence, $t=m-1-\alpha u$.
Then
$$ \left\lfloor\frac{a_0-rt}{m}\right\rfloor=r-s-k-1=\alpha-s-1 . $$
Also, $1+\beta_i+t\leq m-1-(\alpha-j)u$ for all $i$.
Thus $a_j+t\leq m-1$ for all $j\in\{1,\ldots,\alpha\}$, so 
$$ \sum_{i=1}^s \left\lfloor\frac{a_i+t}{m}\right\rfloor \leq s-\alpha. $$
Therefore,
$$ \sum_{i=1}^s \left\lfloor\frac{a_i+t}{m}\right\rfloor + \left\lfloor\frac{a_0-rt}{m}\right\rfloor <0. $$

Now suppose $\left\lfloor\frac{a_i+t}{m}\right\rfloor\ne\left\lfloor\frac{a_j-1+t}{m}\right\rfloor$ for some $j\in\{1,\ldots,s\}$. Since $1\leq a_j+t<2m$, this implies $t=m-a_j=m-1-\beta_j$.
Let $h\in\{0,r-3\}$ be such that $hu\leq\beta_j<(h+1)u$. We have
$$ \left\lfloor\frac{a_0-rt}{m}\right\rfloor=-s-1+\left\lfloor\frac{\alpha+r\beta_j}{m}\right\rfloor=-s-1+h $$
and, for $i>0$,
$$ \left\lfloor\frac{a_i+t}{m}\right\rfloor=1+\left\lfloor\frac{\beta_i-\beta_j}{m}\right\rfloor=\left\{ \begin{array}{ll} 0,&  \beta_i< \beta_j\\ 1, & \beta_i\geq \beta_j\\ \end{array} \right.. $$
Since
$$ |\{i\in\{1,\ldots,s\}:\beta_i\geq\beta_j\}|\leq s-|\{i\in\{1,\ldots,s\}: ih-1<uh\}|=s-h, $$
this implies that
$$\left\lfloor \frac{a_0-rt}{m}\right\rfloor+\sum_{i=1}^m\left\lfloor \frac{a_{i}+t}{m}\right\rfloor \leq (-s-1+h)+(s-h)<0. $$

Finally, let $t=m-1$. Then $\left\lfloor\frac{a_0-rt}{m}\right\rfloor=-s-1$ and $\left\lfloor\frac{a_i+t}{m}\right\rfloor=1$ for all $i>0$. Hence,
$$\left\lfloor \frac{a_0-rt}{m}\right\rfloor+\sum_{i=1}^m\left\lfloor \frac{a_{i}+t}{m}\right\rfloor = (-s-1)+s<0. $$
The thesis follows by Proposition \ref{puregapsinfty}.
\end{proof}

By means of Theorem \ref{distmanypoints}, the results on pure gaps of this section can be used in order to obtain AG codes with good parameters. This is pointed out in the next remarks where we compute an upper bound for the Singleton defect of some codes.

\begin{remark}\label{applications1}
For a Kummer extension $y^m=f(x)$, where $m=ur+1$ and $s\leq r-1$, consider the pure gaps $(a_1,\ldots,a_s)=((r-s-1)m+1,1,\ldots,1)$ and $(b_1,\ldots,b_s)=((r-s-1)m+su,(s-1)u,\ldots,u)$.
Define the divisors $G=\sum_{i=1}^s (a_i+b_i-1)P_i$ and $D$ as the sum of $n$ rational places of $F$ different from $P_1,\ldots,P_s$. Consider the $[n,k,d]$-code $C_{\Omega}(D,G)$.

Suppose $2g-2 < \deg G < n$, then  $k=n+g-1-\deg G$. Since $F$ has genus $g=ur(r-1)/2$ we have by Proposition {\rm\ref{manypoints1}} and Theorem {\rm\ref{distmanypoints}} that 
the Singleton defect $\delta=n+1-k-d$ satisfies
$$ \delta\leq \frac{ur(r-1)-us(s+1)}{2}. $$
\end{remark}

\begin{remark}\label{applications2}
For a Kummer extension $y^m=f(x)$, where $m=ur+1$ and $s\leq r-2$ consider the pure gaps $(a_0,a_1,\ldots,a_s)=((r-s-1)m-r,1,\ldots,1)$ and $(b_0,b_1,\ldots,b_s)=((r-s-1)m-r+s,u,\ldots,su)$.
Define the divisors $G=(a_0+b_0-1)P_\infty+\sum_{i=1}^s (a_i+b_i-1)P_i$ and $D$ as the sum of $n$ rational places of $F$ different from $P_\infty,P_1,\ldots,P_s$ and consider the $[n,k,d]$-code $C_{\Omega}(D,G)$.

Suppose $2g-2 < \deg G < n$, then  $k=n+g-1-\deg G$. Since $F$ has genus $g=ur(r-1)/2$ we have by Proposition {\rm \ref{manypoints2}} and Theorem {\rm\ref{distmanypoints}} that 
the Singleton defect $\delta$ satisfies
$$ \delta\leq \frac{ur(r-1)-us(s+1)}{2}-s-1.$$
\end{remark}

We illustrate the results obtained by constructing codes on many points over the Hermitian function field. 

\begin{example}\label{ExHerm}
The Hermitian function field $\mathcal H$ is defined by the affine equation $y^{q+1}=x^q+x$, it is maximal over $\mathbb{F}_{q^2}$ and has genus $g=q(q-1)/2$. We apply Remark \ref{applications1} to construct $[n,k,d]$-codes $C_{\Omega}(D,G)$ from $\mathcal H$. In this case we have $r=q,  u=1, 1 \leq s \leq q-1$ and $ \deg G= 2(q-s-1)(q+1)+s(s+1)/2$. 
We choose $s$ such that $2g-2 < \deg G < n$ with $n=q^3+1-s$. Then
\begin{align*}
& k=n+g-1-\deg G=q^3-\frac{3}{2}q^2+\Big(2s-\frac{1}{2}\Big)q-\frac{s^2-s}{2}+2,\\
& d \geq \deg G -(2g-2)+s+\sum_{i=1}^s (b_i-a_i)=q^2-(2s-1)q+s^2-s.
\end{align*}
\end{example}
Table \ref{tabellina} summarizes results from Example \ref{ExHerm}. We list AG codes with the same or better parameters with respect to the corresponding ones in the MinT's Tables \cite{MinT}.

\begin{center}
\begin{table}
\caption{Results from Example \ref{ExHerm}}\label{tabellina}
\vspace*{0.3 cm}
\begin{center}
\begin{tabular}{|c|c|c|c|c|c|}
\hline 
$q^2$ & $s$ & $n$ & $k$ & $d\geq$ & \text{improvement on $d$ compared with ~\cite{MinT}} \\ 
\hline 
16 & 1 & 64 & 48 & 12 & 1\\ 
\hline 
16 & 2 & 63 & 55 & 6 & 0 \\ 
\hline 
25 & 1 & 125 & 97 & 20 & 1 \\ 
\hline 
25 & 2 & 124 & 106 & 12 & 1 \\ 
\hline 
49& 2 & 342 & 295 & 30 & 3 \\ 
\hline 
49 & 3 & 341 & 307 & 20 & 1 \\ 
\hline 
64 & 1 & 512 & 430 & 56 & 1\\ 
\hline 
64 & 2 & 511 & 445 & 42 & 3\\ 
\hline
64 & 3 & 510 & 459 & 30 & 2\\ 
\hline 
64 & 4 & 509 & 472 & 20 & 0 \\ 
\hline 
81 & 3 & 727 & 656 & 42 & 3\\ 
\hline 
81 & 4 & 726 & 671 & 30 & 0 \\ 
\hline 
\end{tabular} 
\end{center}
\end{table}
\end{center}

\section{Acknowledgments}
The research of D. Bartoli and G. Zini 
was partially supported by Ministry for Education, University
and Research of Italy (MIUR) (Project PRIN 2012 ``Geometrie di Galois e
strutture di incidenza'' - Prot. N. 2012XZE22K$_-$005)
 and by the Italian National Group for Algebraic and Geometric Structures
and their Applications (GNSAGA - INdAM).
The second  author L. Quoos was partially supported by CNPq, PDE grant number 200434/2015-2. This work was done while the author enjoyed a sabbatical at the Universit\`a degli Studi di Perugia leave from Universidade Federal do Rio de Janeiro.

\end{document}